\title{\LARGE \bf
Synthesis of constrained robust feedback policies and model predictive control
}
\author{Dennis Gramlich, Carsten W. Scherer, Hannah Häring and Christian Ebenbauer
\thanks{The second author thanks Deutsche Forschungsgemeinschaft (DFG, German Research Foundation) under Germany's Excellence Strategy - EXC 2075 – 390740016 for funding and acknowledges the support by the Stuttgart Center for Simulation Science (SimTech).}%
\thanks{Dennis Gramlich, Hannah Häring and Christian Ebenbauer are with the Chair of Intelligent Control Systems,
        RWTH-Aachen,
        D-52074 Aachen, Germany
        {\tt\small \{dennis.gramlich, hannah.haering ,christian.ebenbauer\} @ic.rwth-aachen.de}}%
\thanks{Carsten W. Scherer is with the Chair of Mathematical Systems Theory, University of Stuttgart, 70569 Stuttgart, Germany {\tt\small carsten.scherer@imng.uni-stuttgart.de}}%
}%
\definecolor{mycolor1}{rgb}{0.00000,0.44700,0.74100}%
\definecolor{mycolor2}{rgb}{0.85000,0.32500,0.09800}%
\definecolor{mycolor3}{rgb}{0.92900,0.69400,0.12500}%
\definecolor{mycolor4}{rgb}{0.49400,0.18400,0.55600}%
\definecolor{mycolor5}{rgb}{0.46600,0.67400,0.18800}%
\definecolor{mycolor6}{rgb}{0.30100,0.74500,0.93300}%
\definecolor{mycolor7}{rgb}{0.63500,0.07800,0.18400}%
\begin{document}

\maketitle
\thispagestyle{empty}
\pagestyle{empty}

\begin{abstract}

In this work, we develop a method based on robust control techniques to synthesize robust time-varying state-feedback policies for finite, infinite, and receding horizon control problems subject to convex quadratic state and input constraints. To ensure constraint satisfaction of our policy, we employ (initial state)-to-peak gain techniques. Based on this idea, we formulate linear matrix inequality conditions, which are simultaneously convex in the parameters of an affine control policy, a Lyapunov function along the trajectory and multiplier variables for the uncertainties in a time-varying linear fractional transformation model. In our experiments this approach is less conservative than standard tube-based robust model predictive control methods.

\end{abstract}

\section{INTRODUCTION}
\label{sec:1}

In this paper, we deal with the synthesis of robust policies subject to linear time-varying discrete-time dynamics and convex constraints. Solving this type of control problem has applications, for example, in robust model predictive control (MPC) and trajectory planning.

Constraint satisfaction and robustness to uncertainties in dynamical systems are among the most important design objectives in controller synthesis. Controller design methods addressing these objectives are typically ascribed to the fields of optimal and robust control.
A key issue in this setting is the complexity of the policy even for linear quadratic control problems if these are subject to constraints \cite{wen2009analytical} or uncertainties \cite{ramirez2002characterization}. In fact, no method is known today to solve these problems exactly with polynomial complexity (over an infinite horizon). For this reason, relaxation techniques are employed to find suboptimal solutions. Specifically, robust control relies on multiplier relaxations \cite{chou1999stability,scherer2006lmi,veenman2016robust} to guarantee robustness against uncertainties and, e.g., MPC relaxes the infinite horizon optimal control problem by approximating it with a finite horizon problem.

Given these facts, a natural approach to address control problems with constraints and uncertainties is to combine both relaxation techniques, i.e., to study the finite horizon robust control problem subject to constraints. This is the topic of the present paper. To avoid overly conservative stability and feasibility results~\cite{mayne2000constrained}, we optimize over affine linear feedback policies instead of feedforward policies. This is by now generally accepted as the preferred approach for problems with uncertainties \cite{goulart2006optimization}. Specifically, to deal with uncertainties, we utilize multipliers from robust control. To ensure robust constraint satisfaction, we define the constraints as outputs of our system and use the energy-to-peak gain \cite{rotea1993generalized} to bound the maximum of these outputs for a given initial state. The resulting optimization problem can be convexified in all variables as a linear matrix inequality (LMI) problem.

The fusion of robust control with receding horizon optimal control has been studied since at least 1987~\cite{campo1987robust}. From this perspective, the present paper follows the ideas outlined by Kothare in \cite{kothare1994robust,kothare1996robust}, who proposed to solve time-invariant robust control problems subject to constraints online, taking the current state into account. The time-invariance introduces conservatism, which is why \cite{casavola2000min,casavola2004robust,schuurmans2000robust} proposed to additionally optimize over a $N$-step input sequence using Kothares MPC scheme as terminal ingredient. Convex optimization of time-varying feedback policies subject to constraints and uncertainties is then established by minimax MPC as described, e.g., in \cite{lofberg2003minimax}. All these methods rely on solving LMI optimization problems online, which is often considered costly. On the other hand, as we show in \cite{gramlich2023structure} that the structure of LMIs for robust controller synthesis can be exploited to develop faster solvers, which is particularly interesting for online applications like MPC.


After minimax MPC, so-called tube-based MPC methods with fixed feedback terms~\cite{langson2004robust} gained much attention due to their seemingly lower computational complexity. However, this reduced computational effort comes at the price of not being able to optimize the tube and tube controller online and tightening the constraints, resulting in conservatism. Furthermore, optimizing robust performance and incorporating model uncertainty descriptions from robust control is challenging in tube-based MPC. In recent publications, the integration of classical robust control tools, like dynamic IQCs \cite{morato2023stabilizing,schwenkel2022model}, into MPC is explored. In addition, optimizing tube parameters online to reduce conservatism has gained interest \cite{6170547}.

In parallel to robust MPC, finite horizon robust controller design is investigated from a robust control perspective \cite{uchida1992finite,buch2021finite,gramlich2022robust} using LMI-based tools. Constraints have been incorporated recently into such finite horizon robust control settings under the umbrella of reachability analysis \cite{reynolds2021funnel}. This synthesis technique is extended to joint optimization over feedforward and feedback terms in \cite{kim2022joint} for Lipschitz continuous uncertain nonlinearities.

Table \ref{tab:compareMPC} gives an overview of robust MPC schemes from the literature. We indicate whether the MPC methods are recursively feasible (RF), can incorporate linear fractional transformations (LFTs) as uncertainty models, optimize over feedback policies online (OFPO), involve an LMI optimization problem, and how the number of decision variables grows with the prediction horizon. Integrating LFTs is beneficial as these representations of uncertain systems are capable of handling rational dependencies of system matrices on uncertain parameters. Additionally, optimizing feedback policies online is desirable, since this increases the robustness of MPC schemes. LMI optimization problems should be avoided, because the typical online optimization problem in linear MPC is a less costly to solve quadratically constrained quadratic program. Finally, it is desirable that the number of decision variables grows at most linearly in $N$. 

Among the multitude of tube-based MPC methods in the literature, some allow for flexible uncertainty descriptions such as LFTs. However, the offline calculation of the feedback terms leads to unavoidable conservatism. This is in contrast to the other methods considered, which perform these calculations online. Here, LMI-MPC \cite{lofberg2003minimax} and SLSMPC can be considered as disturbance feedback MPC methods, since LMI-MPC uses disturbance feedback and SLSMPC optimizes the closed loop transfer function from disturbances to outputs. These methods can achieve any of the desired properties in Table \ref{tab:compareMPC}. However, we mention that LFTs can only be handled when incorporating LMIs. Finally, we considered algorithms \cite{kothare1996robust}, which solve time-invariant controller design problems online. Through this, feedback terms are optimized online, LFTs can be considered, and the optimization is recursively feasible. Constraints are taken into account using the (initial state)-to-peak gain. Here, conservatism arises from the assumption of quadratic Value/Lyapunov functions and the time invariance of the controller law. The present approach generalizes \cite{kothare1996robust} to the time-varying case, removing the conservatism caused by time-invariant feedback policies. Compared to disturbance feedback approaches, the quadratic value function still poses a source of conservativeness, but, on the other hand, recursive feasibility is much easier to obtain and the number of decision variables grows only linearly with the prediction horizon. Moreover, LMI-based robust controller design is embedded in our approach.

We start our exposition with Section \ref{sec:2} demonstrating how the energy-to-peak gain enables the integration of constraints in LMI formulations of robust control. The problem formulation of Section \ref{sec:2} does not consider uncertainties in which case we establish that our convexification is feasible at an initial state $\bar{x}$, whenever the corresponding open loop optimal control problem is feasible at $\bar{x}$. Uncertainties are included in Section \ref{sec:3}. We introduce another relaxation step in Section \ref{sec:4} to also handle infinite horizon robust optimal control problems with constraints. Finally, in Section \ref{sec:5} this formulation is shown to be stable and recursively feasible if used in a receding horizon fashion.


\begin{table}
	\caption[Comparison of existing approaches for robust Model Predictive Control.]{This table compares the feastures recursive feasibility (RF), linear fractional representation uncertainty model (LFT), online feedback policy optimization (OFPO), LMI optimization problem (LMIs) and the asymptotic number of decision variables in the prediction horizon $N$ of the MPC schemes \emph{tube MPC} \cite{langson2004robust,lorenzen2019robust,lu2019robust,kohler2019linear}, \emph{$\infty$-hor. MPC} \cite{kothare1996robust,chen2006moving}, SLSMPC \cite{chen2022robust}, DFMPC \cite{goulart2006optimization} (disturbance feedback MPC), LMI-MinMax MPC \cite{lofberg2003minimax} and our approach.}
	\centering
	\begin{tabular}{@{}l|lllllll@{}}
		\toprule
		method & RF & LFTs & OFPO & LMIs & \# dec. vars.
		\\\midrule
		Tube MPC & Yes & (Yes) & No & No & $O(N)$\\
		$\infty$-hor. MPC & Yes & Yes & Yes & Yes & -\\
		SLSMPC & No & No & Yes & No & $O(N^2)$\\
		DFMPC & (Yes) & (Yes) & Yes & (Yes) & $O(N^2)$\\
		LMI-MPC & Yes & Yes & Yes & Yes & $O(N^2)$\\
		Our method & Yes & Yes & Yes & Yes & $O(N)$
		\\\bottomrule
	\end{tabular}
	\label{tab:compareMPC}
\end{table}


\section{The uncertainty-free finite-horizon case}
\label{sec:2}

In a first step, we restrict ourselves to time-varying affine systems without uncertainties of the form
\begin{align}
	x_{k+1} = f_k + A_k x_k + B_k^1 u_k, \label{eq:undisturbedSys}
\end{align}
where $x_k \in \bbR^n$ is the state and $u_k \in \bbR^m$ is the control input and $f_k \in \bbR^n$ is the affine term. We further assume that the control policy satisfies some quadratic constraints, i.e., $u_k$ should be chosen such that
\begin{align}
	v_{ki}^\top v_{ki} &\leq 1 & \forall i &= 1,\ldots,s,\quad k = 0,\ldots,N \label{eq:inputConstraint}
\end{align}
holds for affine outputs $v_{ki} = g_{ki}^2 + C_{ki}^2 x_k + D_{ki}^{21} u_k$. Since affine terms $g_{ki}^2$ are included, these outputs can describe a large class of convex quadratic constraints.

For systems \eqref{eq:undisturbedSys} with constraints \eqref{eq:inputConstraint}, an initial state $\bar{x}$, and a positive definite matrix $P_f$, we study the problem
\begin{align}
	\minimize_{(u_k)_{k=0}^{N - 1}} ~&~ \sum_{k=0}^{N - 1} y_k^\top y_k
	+
	\begin{pmatrix}
		1\\
		x_{N}
	\end{pmatrix}^\top
	P_f
	\begin{pmatrix}
		1\\
		x_{N}
	\end{pmatrix} \label{eq:DP1}\\
	\mathrm{s.t.} ~&~ x_{k+1} = f_k + A_k x_k + B_k^1 u_k, \nonumber\\
	~&~ y_k = g_k^1 + C_k^1 x_k + D_k^{11} u_k, \nonumber\\
	~&~ v_{ki} = g_{ki}^2 + C_{ki}^2 x_k + D_{ki}^{21} u_k, ~~~ i = 1,\ldots,s, \nonumber\\
	~&~ v_{ki}^\top v_{ki} \leq 1, \hspace{28mm} i = 1,\ldots,s, \nonumber\\
	~&~ x_0 = \bar{x}. \nonumber
\end{align}

To approach this optimal control problem, we construct a family of functions $V_k : \bbR^n \to \bbR$ with
\begin{align}
	V_k(x_k) = \begin{pmatrix}
		1\\
		x_k
	\end{pmatrix}^\top 
	\underbrace{\begin{pmatrix}
			p_k^{11} & p_k^{12}\\
			p_k^{21} & P_k^{22}
	\end{pmatrix}}_{=:P_k}
	\begin{pmatrix}
		1\\
		x_k
	\end{pmatrix} \label{eq:LyapParametrization}
\end{align}
defined by a sequence of symmetric matrices $P_k \in \bbR^{(1+n)\times (1+n)}$, a sequence of affine linear control policies $u_k = \pi_k(x_k) = k_k^1 + K_k^2 x_k = K_k \begin{pmatrix}
	1 & x_k^\top
\end{pmatrix}^\top$, and a constant $\nu \in \bbR$ satisfying the conditions
\begin{subequations}
	\label{eq:LyapConditions1}
	\begin{align}
		V_k(x) &\geq  y^\top y + 
		V_{k+1}(x^+), \label{eq:costBound}\\
		\mathrm{for} ~&~ x^+ = f_k + B_k^1 k_k^1 + (A_k + B_k^1K_k^2)x, \nonumber\\
		~&~ y = g_k^1 + D_k^{11} k_k^1 + (C_k^1 + D_k^{11} K_k^2)x,\nonumber\\
		V_0(\bar{x}) &\leq \nu, \label{eq:levelSetBound}\\
		V_{N} (x) &\geq \begin{pmatrix}
			1\\
			x
		\end{pmatrix}^\top
		P_f
		\begin{pmatrix}
			1\\
			x
		\end{pmatrix}, \label{eq:terminalConstraint1} \\
		V_k(x) &\leq \nu \Rightarrow v_{i}^\top v_{i} \leq 1, \label{eq:feasibility}\\
		\mathrm{for} ~&~ v_{i} = g_{ki}^2 + D_{ki}^{21}k_k^1 + (C_{ki}^2 + D_{ki}^{21} K_k^2) x, \nonumber
	\end{align}
\end{subequations}
for all $x \in \bbR^n$, $k=0,\ldots, N-1$ and $i = 1,\ldots,s$. Condition \eqref{eq:costBound} states that $V_k$ decreases at least by the cost $y_k^\top y_k$ at every time-step, \eqref{eq:levelSetBound} signifies that the initial state is contained in a $\nu$-sublevel set of $V_0$, \eqref{eq:terminalConstraint1} implies that $V_{N}$ upper bounds the terminal cost. Lastly, \eqref{eq:feasibility} means that states in the $\nu$-sublevel set of $V_k$ satisfy all constraints.

We can formulate \eqref{eq:LyapConditions1} as LMI constraints in $(P_k)_{k=0}^{N}$ and $\nu$.

\begin{proposition}
	\label{prop:PreliminaryInequaities}
	Let the functions $V_k$ be parametrized as in \eqref{eq:LyapParametrization} and define $\Sigma_0 := \begin{pmatrix}
		1 & \bar{x}^\top
	\end{pmatrix}^\top \begin{pmatrix}
		1 & \bar{x}^\top
	\end{pmatrix}$ as well as
	\begin{align*}
		\begin{pmatrix}
			f_k^K & A_k^K\\ 
			g_k^{1K} & C_k^{1K}\\
			g_{ki}^{2K} & C_{ki}^{2K}
		\end{pmatrix}
		:=
		\begin{pmatrix}
			f_k & A_k & B_k^1\\
			g_k^1 & C_k^1 & D_k^{11}\\
			g_{ki}^2 & C_{ki}^2 & D_{ki}^{21}
		\end{pmatrix}
		\begin{pmatrix}
			1 & 0\\
			0 & I\\
			k_k^1 & K_k^2
		\end{pmatrix}.
	\end{align*}
	Then, \eqref{eq:costBound}-\eqref{eq:terminalConstraint1} are equivalent to the conditions
	\begin{subequations}
	\begin{align}
		0&\succeq
		(\bullet)^\top 
		\begin{pmatrix}
			-p_k^{11} & -p_k^{12}\\
			-p_k^{21} & -P_k^{22}\\
			& & p_{k+1}^{11} & p_{k+1}^{12}\\
			& & p_{k+1}^{21} & P_{k+1}^{22}\\
			& & & & I
		\end{pmatrix}
		\begin{pmatrix}
			1 & 0\\
			0 & I\\
			1 & 0\\
			f_k^K & A_k^K\\ 
			g_k^{1K} & C_k^{1K}
		\end{pmatrix}, \label{eq:bellmanInequality1}\\
		\nu &\geq \trace P_0 \Sigma_0, \label{eq:initialInequality1}\\
		P_{N} &\succeq P_f \label{eq:terminalInequality1}
		\intertext{respectively. Moreover, \eqref{eq:feasibility} is implied by the inequality}
		P_k & \succeq \nu \begin{pmatrix}
		g_{ki}^{2K} & C_{ki}^{2K}
	\end{pmatrix}^\top
	\begin{pmatrix}
		g_{ki}^{2K} & C_{ki}^{2K}
	\end{pmatrix}. \label{eq:constraintInequality1}
	\end{align}
	\end{subequations}
\end{proposition}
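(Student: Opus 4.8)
The plan is to reduce every assertion to the elementary completion-of-squares fact that, for a symmetric $M \in \bbR^{(1+n)\times(1+n)}$, the inequality $\begin{pmatrix} 1 \\ x \end{pmatrix}^\top M \begin{pmatrix} 1 \\ x \end{pmatrix} \geq 0$ holds for all $x \in \bbR^n$ if and only if $M \succeq 0$. I would prove this auxiliary equivalence first, since it is the only place where a genuine argument rather than bookkeeping is needed. The implication ``$\Leftarrow$'' is immediate by evaluating the quadratic form at $z = \begin{pmatrix} 1 & x^\top \end{pmatrix}^\top$. The converse is the \emph{main obstacle}: one must recover global positivity of $M$ from positivity only on the affine slice $\{z : z_1 = 1\}$. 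For this I would first substitute $x \mapsto x/t$, multiply by $t^2$, and let $t \to 0^+$ to obtain $M_{22} \succeq 0$; then, for arbitrary $z = \begin{pmatrix} a & b^\top \end{pmatrix}^\top$ with $a \neq 0$, write $z^\top M z = a^2 \begin{pmatrix} 1 & (b/a)^\top \end{pmatrix} M \begin{pmatrix} 1 & (b/a)^\top \end{pmatrix}^\top \geq 0$, while the case $a = 0$ is handled by $M_{22} \succeq 0$.

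With this lemma in hand the three equivalences follow by substitution. Using the closed-loop abbreviations from the statement, I write $\begin{pmatrix} 1 \\ x^+ \end{pmatrix} = \begin{pmatrix} 1 & 0 \\ f_k^K & A_k^K \end{pmatrix}\begin{pmatrix} 1 \\ x \end{pmatrix}$ and $y = \begin{pmatrix} g_k^{1K} & C_k^{1K} \end{pmatrix}\begin{pmatrix} 1 \\ x \end{pmatrix}$, so that the difference $V_k(x) - y^\top y - V_{k+1}(x^+)$ collapses into a single quadratic form $\begin{pmatrix} 1 \\ x \end{pmatrix}^\top M_k \begin{pmatrix} 1 \\ x \end{pmatrix}$ whose matrix $M_k$ is exactly the negative of the right-hand side of \eqref{eq:bellmanInequality1} (the congruence factor $(\bullet)$ stacks $I_{1+n}$, the map to $\begin{pmatrix} 1 & (x^+)^\top \end{pmatrix}^\top$, and the map to $y$ against the block-diagonal $\mathrm{diag}(-P_k, P_{k+1}, I)$). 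Hence \eqref{eq:costBound} $\Leftrightarrow M_k \succeq 0 \Leftrightarrow$ \eqref{eq:bellmanInequality1}. For the terminal condition I note $V_N(x) - \begin{pmatrix} 1 & x^\top \end{pmatrix} P_f \begin{pmatrix} 1 & x^\top \end{pmatrix}^\top = \begin{pmatrix} 1 \\ x \end{pmatrix}^\top (P_N - P_f) \begin{pmatrix} 1 \\ x \end{pmatrix}$, and the lemma converts \eqref{eq:terminalConstraint1} into $P_N \succeq P_f$, which is \eqref{eq:terminalInequality1}.

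For \eqref{eq:levelSetBound} no homogenization is required, since $\bar{x}$ is a fixed vector: because $\Sigma_0$ is the rank-one matrix $\begin{pmatrix} 1 & \bar{x}^\top \end{pmatrix}^\top \begin{pmatrix} 1 & \bar{x}^\top \end{pmatrix}$, cyclicity of the trace gives $V_0(\bar{x}) = \trace(P_0 \Sigma_0)$, so \eqref{eq:levelSetBound} is literally \eqref{eq:initialInequality1}. Finally, \eqref{eq:feasibility} is claimed only to be \emph{implied} by \eqref{eq:constraintInequality1}, reflecting that this is a lossy S-procedure step. Writing $v_i^\top v_i = \begin{pmatrix} 1 \\ x \end{pmatrix}^\top Q_{ki} \begin{pmatrix} 1 \\ x \end{pmatrix}$ with $Q_{ki} = \begin{pmatrix} g_{ki}^{2K} & C_{ki}^{2K} \end{pmatrix}^\top \begin{pmatrix} g_{ki}^{2K} & C_{ki}^{2K} \end{pmatrix}$, I would assume $P_k \succeq \nu Q_{ki}$, so that $\nu\, v_i^\top v_i \leq V_k(x)$ for every $x$; then on the sublevel set $V_k(x) \leq \nu$ one chains $\nu\, v_i^\top v_i \leq V_k(x) \leq \nu$ and divides by $\nu$ to obtain $v_i^\top v_i \leq 1$. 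The delicate point is that this division needs $\nu > 0$, which I would justify from the feasibility chain (the level $\nu$ upper-bounds $V_0(\bar{x}) \geq 0$), and I would remark that the reverse implication fails in general, which is precisely why the statement asserts only sufficiency here rather than equivalence.
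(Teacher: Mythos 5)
Your proof is correct and follows essentially the same route as the paper's: congruence with $\begin{pmatrix} 1 & x^\top \end{pmatrix}$ turns \eqref{eq:costBound}, \eqref{eq:terminalConstraint1} and \eqref{eq:feasibility} into their matrix counterparts, the trace identity handles \eqref{eq:initialInequality1}, and the one-sided sublevel-set argument gives \eqref{eq:constraintInequality1} $\Rightarrow$ \eqref{eq:feasibility}. You are in fact more careful than the paper, which silently relies on both the homogenization fact you prove (nonnegativity of the quadratic form on the slice $z_1 = 1$ implies positive semidefiniteness of the matrix, needed for the nontrivial direction of the claimed equivalences) and the positivity $\nu > 0$ required for the final division.
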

\begin{proof}
	Left-multiplication with $\begin{pmatrix}
		1 & x^\top
	\end{pmatrix}$ and right-multiplication with $\begin{pmatrix}
	1 & x^\top
\end{pmatrix}^\top$ of the matrix inequalities \eqref{eq:bellmanInequality1} and \eqref{eq:terminalInequality1} reveals the equivalence to \eqref{eq:costBound} and \eqref{eq:terminalConstraint1}, respectively. The same argument shows \eqref{eq:constraintInequality1} $\Rightarrow$ \eqref{eq:feasibility}. Lastly, plugging the definition of $\Sigma_0$ into \eqref{eq:initialInequality1} proves the equivalence to \eqref{eq:levelSetBound}.
\end{proof}

\begin{remark}
	Using the lossless $S$-procedure, it is possible to construct an equivalent LMI constraint for \eqref{eq:feasibility} instead of \eqref{eq:constraintInequality1}. However, we could not convexify the lossless constraint in the all parameters.
\end{remark}

The matrix inequalities of Proposition \ref{prop:PreliminaryInequaities} are linear in the Lyapunov function matrices $(P_k)_{k=0}^{N}$ and the variable $\nu$. In addition, since we impose the constraint $\nu \geq \trace P_0 \Sigma_0$, we obtain the sequence of estimates
\begin{align*}
	\nu &\geq  V_0(x_0) \geq y_0^\top y_0 +  V_1(x_1)\\
	&\geq y_0^\top y_0 + y_1^\top y_1 +  V_2(x_2)\\
	&\geq \sum_{k=0}^{N-1} y_k^\top y_k + V_{N}(x_{N})\\
	&\geq \sum_{k=0}^{N - 1} y_k^\top y_k
	+
	\begin{pmatrix}
		1\\
		x_{N}
	\end{pmatrix}^\top
	P_f
	\begin{pmatrix}
		1\\
		x_{N}
	\end{pmatrix}.
\end{align*}
Thus, by minimizing $\nu$ as objective function, we can try to find a smallest upper bound on the performance of our controller $(K_k)_{k=0}^{N-1}$ while certifying constraint satisfaction. However, the matrix inequalities of Proposition~\ref{prop:PreliminaryInequaities} are not linear in the controller $(K_k)_{k=0}^{N-1}$ such that they cannot be utilized for convex synthesis. For this reason, we provide a convexification in all variables in the following theorem.

\begin{theorem}
	\label{thm:PeakGainControllerSynthesis}
	Define the decision variables
	\begin{align*}
		\widetilde{P}_k &:= P_k^{-1}, \hspace{2mm} \widetilde{K}_k := K_k P_k^{-1}, \hspace{2mm} \tilde{\nu} := \nu^{-1}, \hspace{2mm} Z := \nu^{-2} \sqrt{\Sigma_0} P_0 \sqrt{\Sigma_0}
	\end{align*}
	including the slack variable $Z$ and introduce the notation
	\begin{align*}
		\begin{pmatrix}
			\tilde{f}_k^K & \widetilde{A}_k^K\\
			\tilde{g}_k^{1K} & \widetilde{C}_k^{1K}\\
			\tilde{g}_{ki}^{2K} & \widetilde{C}_{ki}^{2K}
		\end{pmatrix}
		:=
		\begin{pmatrix}
			f_k & A_k & B_k^1\\
			g_k^1 & C_k^1 & D_k^{11}\\
			g_{ki}^2 & C_{ki}^2 & D_{ki}^2
		\end{pmatrix}
		\begin{pmatrix}
			\tilde{p}_k^{11} & \tilde{p}_k^{12}\\
			\tilde{p}_k^{21} & \widetilde{P}_k^{22}\\
			\tilde{k}_k^1 & \widetilde{K}_k^2
		\end{pmatrix}.
	\end{align*}
	Then, for positive definite matrices $\widetilde{P}_k$ and $P_k$, \eqref{eq:bellmanInequality1}-\eqref{eq:constraintInequality1} are equivalent to
	\begin{subequations}
	\begin{align}
		0 &\preceq
		\begin{pmatrix}
			\tilde{p}_{k+1}^{11} & \tilde{p}_{k+1}^{12} & 0 & \tilde{p}_k^{11} & \tilde{p}_k^{12}\\
			\tilde{p}_{k+1}^{21} & \widetilde{P}_{k+1}^{22} & 0 & \tilde{f}_k^K & \widetilde{A}_k^K\\
			0 & 0 & I & \tilde{g}_k^{1K} & \widetilde{C}_k^{1K}\\
			\tilde{p}_k^{11} & (\tilde{f}_k^K)^\top & (\tilde{g}_k^{1K})^\top & \tilde{p}_k^{11} & \tilde{p}_k^{12}\\
			\tilde{p}_k^{21} & (\widetilde{A}_k^K)^\top & (\widetilde{C}_k^{1K})^\top  & \tilde{p}_k^{21} & \widetilde{P}_k^{22}
		\end{pmatrix},
		\label{eq:linearCostInequality}\\
		0 &\preceq
		\begin{pmatrix}
			\widetilde{P}_0 & \tilde{\nu} \sqrt{\Sigma_0}\\
			\tilde{\nu} \sqrt{\Sigma_0}^\top & Z
		\end{pmatrix},~~~ \trace Z \leq \tilde{\nu}, \label{eq:linearInitialInequality}\\
		\widetilde{P}_{N} & \preceq P_f^{-1}, \label{eq:linearTerminalInequality}\\
		0 & \preceq
		\begin{pmatrix}
			\tilde{p}_k^{11} & \tilde{p}_k^{12} & \tilde{g}_{ki}^{2K}\\
			\tilde{p}_k^{21} & \widetilde{P}_k^{22} & \widetilde{C}_{ki}^{2K} \\
			\tilde{g}_{ki}^{2K} & \widetilde{C}_{ki}^{2K} & \tilde{\nu} I
		\end{pmatrix}. \label{eq:constraintInequality2}
	\end{align}
	\end{subequations}
	
\end{theorem}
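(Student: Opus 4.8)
The plan is to treat the four matrix inequalities of Proposition \ref{prop:PreliminaryInequaities} one at a time, converting each into the asserted form by the same two-step recipe: first a Schur complement to expose the inverses $P_k^{-1}$ and $\nu^{-1}$, then a congruence transformation by a block-diagonal matrix built from $\widetilde P_k = P_k^{-1}$ (and, where needed, $\tilde\nu = \nu^{-1}$). The engine behind the linearization is the single algebraic identity
\begin{align*}
	\begin{pmatrix} 1 & 0\\ 0 & I\\ k_k^1 & K_k^2\end{pmatrix} P_k^{-1}
	= \begin{pmatrix} \tilde p_k^{11} & \tilde p_k^{12}\\ \tilde p_k^{21} & \widetilde P_k^{22}\\ \tilde k_k^1 & \widetilde K_k^2\end{pmatrix},
\end{align*}
which holds exactly because $\widetilde K_k = K_k P_k^{-1}$ and $\widetilde P_k = P_k^{-1}$. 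Right-multiplying the defining factorization of Proposition \ref{prop:PreliminaryInequaities} by $P_k^{-1}$ and inserting this identity yields $\begin{pmatrix} f_k^K & A_k^K\end{pmatrix} P_k^{-1} = \begin{pmatrix}\tilde f_k^K & \widetilde A_k^K\end{pmatrix}$, and likewise for the $(g^{1K}_k,C^{1K}_k)$ and $(g_{ki}^{2K},C_{ki}^{2K})$ rows. Thus the products that make \eqref{eq:bellmanInequality1}--\eqref{eq:constraintInequality1} nonlinear in $K_k$ become linear in the tilde-variables after multiplication by $P_k^{-1}$. Throughout I use $P_k\succ 0$, $\nu>0$ and $P_f\succ 0$, which legitimize every Schur complement and inverse below.

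For the Bellman inequality \eqref{eq:bellmanInequality1} I first read off that it states $P_k \succeq G_k^\top P_{k+1} G_k + H_k^\top H_k$ with $G_k = \begin{pmatrix} 1 & 0\\ f_k^K & A_k^K\end{pmatrix}$ and $H_k = \begin{pmatrix} g_k^{1K} & C_k^{1K}\end{pmatrix}$. A Schur complement pulling out $P_{k+1}$ and $I$ rewrites this as $\begin{pmatrix} P_k & G_k^\top & H_k^\top\\ G_k & P_{k+1}^{-1} & 0\\ H_k & 0 & I\end{pmatrix}\succeq 0$. Congruence with $\mathrm{diag}(P_k^{-1},I,I)$ turns the $(1,1)$ block into $\widetilde P_k$, keeps $\widetilde P_{k+1}$ and $I$ in place, and replaces the off-diagonal blocks $G_k P_k^{-1}$ and $H_k P_k^{-1}$ by $\begin{pmatrix}\tilde p_k^{11} & \tilde p_k^{12}\\ \tilde f_k^K & \widetilde A_k^K\end{pmatrix}$ and $\begin{pmatrix}\tilde g_k^{1K} & \widetilde C_k^{1K}\end{pmatrix}$ via the identity above. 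A symmetric permutation of the three block rows and columns into the order $(\,(k{+}1)\text{-block},\ I\text{-block},\ k\text{-block}\,)$ then delivers exactly \eqref{eq:linearCostInequality}; since Schur complements, congruences and permutations are all equivalences, \eqref{eq:bellmanInequality1} $\Leftrightarrow$ \eqref{eq:linearCostInequality}.

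The constraint inequality \eqref{eq:constraintInequality1}, namely $P_k \succeq \nu L_{ki}^\top L_{ki}$ with $L_{ki} = \begin{pmatrix} g_{ki}^{2K} & C_{ki}^{2K}\end{pmatrix}$, is handled identically: a Schur complement gives $\begin{pmatrix} P_k & \nu L_{ki}^\top\\ \nu L_{ki} & \nu I\end{pmatrix}\succeq 0$, and congruence with $\mathrm{diag}(P_k^{-1},\nu^{-1}I)$ produces $\widetilde P_k$ in the top-left, $\tilde\nu I$ in the bottom-right, and off-diagonal block $L_{ki}P_k^{-1} = \begin{pmatrix}\tilde g_{ki}^{2K} & \widetilde C_{ki}^{2K}\end{pmatrix}$, which is \eqref{eq:constraintInequality2}. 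The terminal inequality is immediate from the order-reversing property of inversion on positive definite matrices: $P_N \succeq P_f \Leftrightarrow P_f^{-1}\succeq P_N^{-1} = \widetilde P_N$, i.e. \eqref{eq:linearTerminalInequality}. For the initial inequality, note $\trace(\sqrt{\Sigma_0}P_0\sqrt{\Sigma_0}) = \trace(P_0\Sigma_0)$, so with $Z = \nu^{-2}\sqrt{\Sigma_0}P_0\sqrt{\Sigma_0}$ the scalar condition $\trace Z \le \tilde\nu$ reads $\nu^{-2}\trace(P_0\Sigma_0)\le \nu^{-1}$, which after multiplication by $\nu^2>0$ is equivalent to \eqref{eq:initialInequality1}; moreover a Schur complement on the block LMI of \eqref{eq:linearInitialInequality} (using $\widetilde P_0\succ 0$) reduces it to $Z \succeq \tilde\nu^2\sqrt{\Sigma_0}P_0\sqrt{\Sigma_0}$, which holds with equality for this $Z$, so the LMI is satisfied automatically.

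I expect the only genuine difficulty to be the bookkeeping in the Bellman step: keeping the affine ``$1$''-augmentation consistent through the factorization, verifying column by column that $G_k P_k^{-1}$ and $L_{ki}P_k^{-1}$ reproduce precisely the tilde-blocks (this is exactly where the choice $\widetilde K_k = K_k P_k^{-1}$ is forced), and matching the block ordering of \eqref{eq:linearCostInequality} to the congruence output by the right permutation. Everything else reduces to a routine Schur complement or an elementary trace/inverse manipulation.
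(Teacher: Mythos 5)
Your proposal is correct and takes essentially the same route as the paper: congruence with $P_k^{-1}$ (exploiting $\widetilde{K}_k = K_k P_k^{-1}$ to linearize the controller-dependent blocks), Schur complements to expose $\widetilde{P}_{k+1}$ and $\tilde\nu$, inversion order-reversal for the terminal constraint, and the slack-variable/trace argument for the initial condition. The only cosmetic difference is that you apply the Schur complement before the congruence (plus an explicit block permutation) in the Bellman step, whereas the paper congruences first and then takes the Schur complement; these are trivially interchangeable.
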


\begin{proof}
	Multiplying \eqref{eq:bellmanInequality1} from both sides by $\widetilde{P}_k$ yields
	\begin{align*}
		0&\succeq
		(\bullet)^\top 
		\begin{pmatrix}
			-\tilde{p}_k^{11} & -\tilde{p}_k^{12}\\
			-\tilde{p}_k^{21} & -\widetilde{P}_k^{22}\\
			& & p_{k+1}^{11} & p_{k+1}^{12}\\
			& & p_{k+1}^{21} & P_{k+1}^{22}\\
			& & & & I
		\end{pmatrix}
		\begin{pmatrix}
			1 & 0\\
			0 & I\\
			\tilde{p}_k^{11} & \tilde{p}_k^{12}\\
			\tilde{f}_k^K & \widetilde{A}_k^K\\
			\tilde{g}_k^{1K} & \widetilde{C}_k^{1K}
		\end{pmatrix}.
	\end{align*}
	To arrive at \eqref{eq:linearCostInequality}, we apply the Schur complement. To infer \eqref{eq:constraintInequality2} from \eqref{eq:constraintInequality1} we proceed analogously. We multiply from both sides by $\widetilde{P}_k$ to obtain
	\begin{align*}
		\widetilde{P}_k \succeq \nu
		\begin{pmatrix}
			\tilde{g}_{ki}^{2K} & \widetilde{C}_{ki}^{2K}
		\end{pmatrix}^\top \begin{pmatrix}
			\tilde{g}_{ki}^{2K} & \widetilde{C}_{ki}^{2K}
		\end{pmatrix}.
	\end{align*}
	Now, applying the Schur complement lemma yields the LMI \eqref{eq:constraintInequality2}. Next, consider the condition $\nu \geq \trace P_0 \Sigma_0 = \trace \widetilde{P}_0^{-1} \Sigma_0$. To handle this constraint, we add the slack variable $Z \succeq \nu^{-2} \sqrt{\Sigma_0} \widetilde{P}_0^{-1} \sqrt{\Sigma_0}$. With this slack variable, we can replace the original constraint by
	\begin{align*}
		\tilde{\nu} &\geq \trace Z, & Z \succeq \tilde{\nu}^2 \sqrt{\Sigma_0} \widetilde{P}_0^{-1} \sqrt{\Sigma_0}.
	\end{align*}
	By taking the Schur complement, the above is equivalent to \eqref{eq:linearInitialInequality}. Finally, to convexify the constraint $P_{N} \succeq  P_f$, we invert the matrices on both sides leading to \eqref{eq:linearTerminalInequality}.
\end{proof}

With Theorem \ref{thm:PeakGainControllerSynthesis} we arrive at the convex relaxation of problem \eqref{eq:DP1} optimized over control policies
\begin{align}
	\maximize_{\widetilde{P}_k,\widetilde{K}_k,\tilde{\nu},Z} ~&~ \tilde{\nu} \label{eq:relaxedDP1}\\
	\mathrm{s.t.} ~&~ \eqref{eq:linearCostInequality} \text{ for } k = 0,\ldots,N-1, \nonumber\\
	~&~ \eqref{eq:constraintInequality2} \text{ for } k=0,\ldots, N-1, \nonumber\\
	~&~ \eqref{eq:linearInitialInequality}, \text{ and } \eqref{eq:linearTerminalInequality}. \nonumber
\end{align}
In general, such a convex relaxation provides only an upper bound on the optimal value of the considered problem. In the present case without uncertainties, we can nevertheless show that strict feasibility of \eqref{eq:DP1} implies strict feasibility of \eqref{eq:relaxedDP1}.

\begin{theorem} \label{thm:losslessCase}
	There exists an input sequence $(u_k)_{k=0}^{N-1}$ such that the constraints $v_{ki}^\top v_{ki} \leq 1$ of \eqref{eq:DP1} are strictly satisfied, if and only if \eqref{eq:relaxedDP1} is strictly feasible.
\end{theorem}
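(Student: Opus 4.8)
The plan is to prove the two implications separately, moving freely between the LMIs \eqref{eq:linearCostInequality}--\eqref{eq:constraintInequality2}, their equivalent forms \eqref{eq:bellmanInequality1}--\eqref{eq:constraintInequality1}, and the trajectory conditions \eqref{eq:LyapConditions1}; since the equivalences in Proposition \ref{prop:PreliminaryInequaities} and Theorem \ref{thm:PeakGainControllerSynthesis} are built from congruences and Schur complements, they preserve strictness, so it suffices to produce (resp.\ to exploit) \emph{strict} versions of \eqref{eq:bellmanInequality1}, \eqref{eq:initialInequality1}, \eqref{eq:terminalInequality1}, \eqref{eq:constraintInequality1}. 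For the direction ``\eqref{eq:relaxedDP1} strictly feasible $\Rightarrow$ feasible input sequence'' I would extract a Lyapunov sequence $(P_k)$, a policy $(K_k)$ and $\nu>0$ satisfying \eqref{eq:costBound}--\eqref{eq:feasibility} strictly, run the closed loop $u_k=k_k^1+K_k^2 x_k$ from $x_0=\bar{x}$, and note that \eqref{eq:levelSetBound} together with the monotonicity coming from \eqref{eq:costBound} forces $V_k(x_k)\le V_0(\bar{x})<\nu$ for every $k$. Reading the strict form of \eqref{eq:constraintInequality1} as $V_k(x)>\nu\,v_{ki}^\top v_{ki}$ then gives $\nu\,v_{ki}^\top v_{ki}<V_k(x_k)<\nu$, i.e.\ $v_{ki}^\top v_{ki}<1$ along the trajectory. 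This direction is essentially bookkeeping.

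The substantive direction is the converse. Given an input sequence with $v_{ki}^\top v_{ki}<1$ and its trajectory $(x_k)$, I would build an explicit quadratic certificate centered on the trajectory. Taking the degenerate choice $K_k^2=0$, $k_k^1=u_k$ reproduces the nominal trajectory and fixes nominal outputs $y_k$ and $v_{ki}^{\mathrm{nom}}$ with $\|v_{ki}^{\mathrm{nom}}\|<1$. I would then set $V_k(x)=c_k+t\,(x-x_k)^\top N_k(x-x_k)$, with $N_N=I$ and the backward recursion $N_k=A_k^\top N_{k+1}A_k+I$ (well defined and positive definite over the finite horizon, requiring no stabilizability), with $c_k=J_k+R+\varepsilon_k$ where $J_k=\sum_{j=k}^{N-1}\|y_j\|^2+(1,x_N^\top)\,P_f\,(1,x_N^\top)^\top$ is the cost-to-go and $\varepsilon_k>0$ small, and with $\nu=c_0+1$. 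The two free scalars are the uniform offset $R\ge 0$ and the shape parameter $t>0$, and $P_k\succ0$ holds by construction so that Theorem \ref{thm:PeakGainControllerSynthesis} applies.

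After a congruence in the deviation coordinate $\delta=x-x_k$ (which diagonalizes $P_k$ into $\mathrm{diag}(c_k,tN_k)$), the inequalities \eqref{eq:bellmanInequality1}, \eqref{eq:terminalInequality1}, \eqref{eq:constraintInequality1} reduce to $2\times2$ block inequalities whose cross terms are handled by a Schur complement that is $O(1/t)$. The cost and terminal blocks become strict for large $t$ because $N_k-A_k^\top N_{k+1}A_k=I$ and the diagonal margins are $c_k-\|y_k\|^2-c_{k+1}=\varepsilon_k>0$ and $c_N-(1,x_N^\top)P_f(1,x_N^\top)^\top=R+\varepsilon_N>0$. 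The main obstacle, and the reason a naive certificate fails, is the constraint block: its $(1,1)$ entry is $c_k-\nu\,\|v_{ki}^{\mathrm{nom}}\|^2$, and since cost decrease forces $\nu\ge c_0\ge c_k$, this entry can be negative when $\|v_{ki}^{\mathrm{nom}}\|$ is close to $1$. This is exactly the gap between the sufficient condition \eqref{eq:constraintInequality1} and the exact implication \eqref{eq:feasibility} noted in the Remark. The offset $R$ resolves it: writing $c_k-\nu\,\|v_{ki}^{\mathrm{nom}}\|^2=\bigl(J_k+\varepsilon_k-(J_0+\varepsilon_0+1)\|v_{ki}^{\mathrm{nom}}\|^2\bigr)+R\bigl(1-\|v_{ki}^{\mathrm{nom}}\|^2\bigr)$, the bounded first term is dominated by the second once $R$ is large, because $\|v_{ki}^{\mathrm{nom}}\|^2<1$.

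I would therefore order the quantifiers carefully: first fix the trajectory and the matrices $N_k$, then choose $R$ large enough that every constraint $(1,1)$ entry is positive, and finally choose $t$ large enough that all bottom-right blocks are positive definite and all Schur complements stay positive. This renders \eqref{eq:bellmanInequality1}, \eqref{eq:terminalInequality1}, \eqref{eq:constraintInequality1} strict and $\nu>\trace P_0\Sigma_0$ strict, whence Theorem \ref{thm:PeakGainControllerSynthesis} produces a strictly feasible point of \eqref{eq:relaxedDP1}. The only genuinely delicate point is the interplay of the two limits in $R$ and $t$, which the stated ordering keeps under control.
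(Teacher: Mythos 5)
Your proposal is correct in substance, but for the substantive direction it takes a genuinely different route from the paper. The paper's proof (Appendix \ref{app:1}) plugs rank-one candidates $\widetilde{P}_k = \tfrac{1}{c_k}\begin{pmatrix}1 \\ x_k\end{pmatrix}\begin{pmatrix}1 \\ x_k\end{pmatrix}^\top$, $\widetilde{K}_k = \tfrac{1}{c_k}u_k\begin{pmatrix}1 \\ x_k\end{pmatrix}^\top$ directly into the \emph{tilde} variables, reduces all LMIs to the scalar conditions \eqref{eq:c_conditions} on $(c_k)_{k=0}^N$ and $\tilde{\nu}$, and then --- because these rank-one matrices are singular, so the resulting point is only non-strictly feasible --- must restore strictness in a second stage: a Riccati-like backward sweep with $\hat{K}_k = 0$, scaled so that \eqref{eq:constraintInequality1} holds strictly while \eqref{eq:initialInequality1} fails, followed by a convex-combination perturbation with small $\alpha$. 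You instead build a positive definite certificate in one shot: quadratic tubes $V_k(x) = c_k + t(x-x_k)^\top N_k (x-x_k)$ around the nominal trajectory with the open-loop policy $k_k^1 = u_k$, $K_k^2 = 0$, where the offset $R$ absorbs the gap in the constraint inequality (exploiting $\|v_{ki}\|^2 < 1$ in exactly the same way the paper exploits $\varepsilon = 1 - \max_{k,i} v_{ki}^\top v_{ki}$) and large $t$ kills the cross terms via $O(1/t)$ Schur complements. Since your $P_k \succ 0$ from the start, Theorem \ref{thm:PeakGainControllerSynthesis} applies directly and no perturbation stage is needed, which makes your argument more self-contained; the price is the explicit recursion $N_k = A_k^\top N_{k+1} A_k + I$ and the two-parameter quantifier ordering ($R$ before $t$), which you order correctly, since $\nu$ grows with $R$ and $t$ is chosen last. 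The paper's scalar reduction buys something extra, though: the conditions \eqref{eq:c_conditions} are reused in Appendix \ref{app:2} to analyze the optimal value, which your construction does not yield as directly. The easy direction (closed-loop forward simulation, $V_k(x_k) \le V_0(\bar{x}) < \nu$, then strict \eqref{eq:constraintInequality1}) coincides with the paper's.

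One bookkeeping slip to repair: with $c_k = J_k + R + \varepsilon_k$ and $J_k = y_k^\top y_k + J_{k+1}$, the decrease margin is $c_k - y_k^\top y_k - c_{k+1} = \varepsilon_k - \varepsilon_{k+1}$, not $\varepsilon_k$, so you need the $\varepsilon_k$ strictly decreasing in $k$ (e.g.\ $\varepsilon_k = (N+1-k)\varepsilon$). This is trivial to fix and does not affect the structure of the argument.
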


\begin{proof}
	See Appendix \ref{app:1}.
\end{proof}

The costs of \eqref{eq:DP1} and \eqref{eq:relaxedDP1} are not necessarily equivalent, which we discuss in Appendix \ref{app:2}.

\section{The finite-horizon case with uncertainty}
\label{sec:3}

Building on Section \ref{sec:2}, we can incorporate an additional disturbance input in our model and work with
\begin{align}
	x_{k+1} = f_k + A_k x_k + B_k^1 u_k + B_k^2 w_k. \label{eq:uncertainSystem}
\end{align}
In this setup, $w_k \in \bbR^l$ is the disturbance input, which is introduced in addition to the state $x_k \in \bbR^n$ and the input $u_k\in \bbR^m$. We assume that both the disturbances and the controller are subject to constraints, i.e., the control input needs to enforce \eqref{eq:inputConstraint}. Moreover, we require the disturbance input to satisfy the quadratic constraint
\begin{align*}
	\begin{pmatrix}
		z_k\\
		w_k
	\end{pmatrix}^\top \underbrace{\begin{pmatrix}
			M_k^{11} & M_k^{12}\\
			M_k^{21} & M_k^{22}
	\end{pmatrix}}_{=:M_k}
	\begin{pmatrix}
		z_k\\
		w_k
	\end{pmatrix} \geq 0
\end{align*}
for all elements $M_k$ of a convex cone $\calM$, where $z_k = g_k^3 + C_k^3 x_k + D_k^{31} u_k + D_k^{32} w_k$ is some output of our system. We further assume that $M_k^{-1}$ exists for all $M_k\in \calM$, that the inverse is contained in another convex cone $\calM'$ and that for all $M_k \in \calM$ the block $M_k^{11}$ is positive definite and $M_k^{22}$ is negative definite. Introducing quadratically constrained inputs $w_k$ in this fashion is standard in robust control and can be utilized to describe simple bounded disturbances such as $\|w_k\| \leq 1$, but also parametric uncertainties modeled by LFTs as, e.g., in \cite{green2012linear}.

In the following dynamic program, we treat the disturbance as an adversary, i.e., we maximize over $w_k$ while minimizing over $u_k$ in the problem
\begin{align}
	\abrmin_{u_0 \in \calU_0} & \abrmax_{w_0 \in \calW_0} \abrmin_{u_1 \in \calU_1} \abrmax_{w_1 \in \calW_1} \ldots ~ \sum_{k=0}^{N - 1} y_k^\top y_k
	+
	\begin{pmatrix}
		1\\
		x_{N}
	\end{pmatrix}^\top
	P_f
	\begin{pmatrix}
		1\\
		x_{N}
	\end{pmatrix} \label{eq:robustDP}\\
	\mathrm{s.t.} ~&~ x_{k+1} = f_k + A_k x_k + B_k^1 u_k + B_k^2 w_k, \nonumber\\
	~&~ y_k = g_k^1 + C_k^1 x_k + D_k^{11} u_k + D_k^{12} w_k, \nonumber\\
	~&~ v_{ki} = g_{ki}^2 + C_{ki}^2 x_k + D_{ki}^{21} u_k, \hspace{15mm} i = 1,\ldots,s,\nonumber\\
	~&~ z_k = g_k^3 + C_k^3 x_k + D_k^{31} u_k + D_k^{32} w_k, \nonumber\\
	~&~ x_0 = \bar{x}, \nonumber
\end{align}
with sets $\calU_k = \calU_k(x_k)$ and $\calW_k = \calW_k(z_k)$ defined by
\begin{align}
	u_k \in \calU_k(x_k) &\Leftrightarrow v_{ki}^\top v_{ki} \leq 1 & & i = 1,\ldots,s, \label{eq:controlConstraint}\\
	w_k \in \calW_k(z_k) & \Leftrightarrow \begin{pmatrix}
		z_k\\
		w_k
	\end{pmatrix} M_{k} \begin{pmatrix}
		z_k\\
		w_k
	\end{pmatrix} \leq 0 & & \forall M_k \in \calM . \label{eq:disturbanceConstraint}
\end{align}
Note that while minimizing over $u_k$ and maximizing over $w_k$ may seem like a symmetric situation, this is not the case from an optimization perspective. In particular, since the cost function is convex, minimizing over $(u_k)_{k=0}^{N-1}$ for fixed $(w_k)_{k=0}^{N-1}$ is a convex quadratically constrained quadratic program. On the other hand, maximizing over $(w_k)_{k=0}^{N-1}$ for fixed $(u_k)_{k=0}^{N-1}$ is a non-convex quadratically constrained quadratic program. For this reason, we cannot carry over the result of Theorem~\ref{thm:losslessCase} from the uncertainty-free case.

Due to this asymmetry we treat constraints on $u_k$ differently from constraints on $w_k$. Concretely, to handle \eqref{eq:controlConstraint}, we once again search for functions $V_k$ whose sublevel sets guarantee the fullfilment of $u_k \in \calU_k(x_k)$, while \eqref{eq:disturbanceConstraint} is taken into account through the $S$-procedure. Hence, to solve \eqref{eq:robustDP}, we again construct a family of functions $(V_k)_{k=0}^N$ as in \eqref{eq:LyapParametrization}
and a control policy $\pi_k(x_k) = k_k^1 + K_k^2 x_k = K_k \begin{pmatrix}
	1 & x_k^\top
\end{pmatrix}^\top$ satisfying the inequalities \eqref{eq:levelSetBound}-\eqref{eq:feasibility}. In contrast to the disturbance-free case, however, instead of \eqref{eq:costBound}, the increment condition
\begin{align}
	V_k(x) &\geq y^\top y
	+
	V_{k+1}(x^+), \label{eq:robustCostBound}\\
	\mathrm{for} ~&~ x^+ = f_k + B_k^1 k_k^1 + (A_k + B_k^1K_k^2)x + B_k^2 w, \nonumber\\
	~&~ y = g_k + D_k^{11} k_k^1 + (C_k^1 + D_k^{11} K_k^2)x + D_k^{12} w, \nonumber
\end{align}
needs to be satisfied robustly with respect to $w$, i.e., for all $w \in \calW_k(z_k)$ and all $x \in \bbR^n$. By the $S$-procedure, if there exists some $M_k \in \calM$ such that for all $x\in \bbR^n$, $w\in \bbR^l$
\begin{align}
	V_k(x) &\geq y^\top y + \begin{pmatrix}
		z\\
		w
	\end{pmatrix}^\top M_k
	\begin{pmatrix}
		z\\
		w
	\end{pmatrix}
	+
	V_{k+1}(x^+), \label{eq:relaxedRobustCostBound}\\
	\mathrm{for} ~&~ x^+ = f_k + B_k^1 k_k^1 + (A_k + B_k^1K_k^2)x + B_k^2 w, \nonumber\\
	~&~ y = g_k^1 + D_k^{11} k_k^1 + (C_k^1 + D_k^{11} K_k^2)x + D_k^{12} w, \nonumber\\
	~&~ z = g_k^3 + D_k^{31}k_k^1 + (C_k^3 + D_k^{31} K_k^2) x + D_k^{32} w\nonumber
\end{align}
holds, then this implies \eqref{eq:robustCostBound}. Consequently, we can certify robust performance and robust constraint satisfaction using \eqref{eq:relaxedRobustCostBound} and \eqref{eq:levelSetBound}-\eqref{eq:feasibility} as follows.

\begin{proposition}
	\label{prop:PreliminaryRobustInequaities}
	Let the functions $V_k$ be parametrized as in \eqref{eq:LyapParametrization} and define 	
	\begin{align*}
		\begin{pmatrix}
			f_k^K & A_k^K\\ 
			g_k^{1K} & C_k^{1K}\\
			g_k^{3K} & C_{k}^{3K}
		\end{pmatrix}
		=
		\begin{pmatrix}
			f_k & A_k & B_k^1\\
			g_k^1 & C_k^1 & D_k^{11}\\
			g_{k}^3 & C_{k}^3 & D_{k}^{31}
		\end{pmatrix}
		\begin{pmatrix}
			1 & 0\\
			0 & I\\
			k_k & K_k
		\end{pmatrix}.
	\end{align*}
	Then, \eqref{eq:relaxedRobustCostBound} is characterized by
	\begin{align}
		&\resizebox{\linewidth}{!}{$
		(\bullet)^\top
		\begin{pmatrix}
			-p_k^{11} & -p_k^{12}\\
			-p_k^{21} & - P_k^{22}\\
			& & M_k^{22} & & & & M_k^{21}\\
			& & & p_{k+1}^{11} & p_{k+1}^{12}\\
			& & & p_{k+1}^{21} & P_{k+1}^{22}\\
			& & & & & I\\
			& & M_k^{12} & & & & M_k^{11}
		\end{pmatrix}
		\begin{pmatrix}
			1 & 0 & 0\\
			f_k^K & A_k^K & B_k^2\\ 
			g_k^{1K} & C_k^{1K} & D_k^{12}\\
			g_k^{3K} & C_k^{3K} & D_k^{32}\\
			1 & 0 & 0\\
			0 & I & 0\\
			0 & 0 & I
		\end{pmatrix}$}\nonumber\\
		&\hspace{65mm} \preceq 0. \label{eq:robustCostInequality}
	\end{align}
	Furthermore, the conditions \eqref{eq:robustCostInequality} and \eqref{eq:initialInequality1}-\eqref{eq:constraintInequality1} imply robust performance in the sense that $\nu$ upper bounds the optimal cost of \eqref{eq:robustDP} and robust constraint satisfaction in the sense that the policy $(\pi_k)_{k=0}^{N-1}$ chooses feasible inputs for any realization of $(w_k)_{k=0}^{N-1}$ satisfying \eqref{eq:disturbanceConstraint}.
\end{proposition}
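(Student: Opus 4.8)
The statement combines two claims: the equivalence of the dissipation inequality \eqref{eq:relaxedRobustCostBound} with the matrix inequality \eqref{eq:robustCostInequality}, and the fact that \eqref{eq:robustCostInequality} together with \eqref{eq:initialInequality1}--\eqref{eq:constraintInequality1} certifies robust performance and robust constraint satisfaction. The plan for the first claim is to rewrite \eqref{eq:relaxedRobustCostBound} as a single quadratic form in the augmented vector $\xi := \begin{pmatrix} 1 & x^\top & w^\top \end{pmatrix}^\top$ and to recognize that form as the congruence transformation in \eqref{eq:robustCostInequality}. Once the policy $u = k_k^1 + K_k^2 x$ is folded into the $K$-superscripted data, each of $\begin{pmatrix} 1 & (x^+)^\top\end{pmatrix}^\top$, $y$, $\begin{pmatrix} z^\top & w^\top\end{pmatrix}^\top$ and $\begin{pmatrix}1 & x^\top\end{pmatrix}^\top$ is an affine image of $\xi$ given exactly by the block rows of the right factor in \eqref{eq:robustCostInequality}. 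Writing $-V_k(x) = -\begin{pmatrix}1 & x^\top\end{pmatrix}P_k\begin{pmatrix}1&x^\top\end{pmatrix}^\top$, $V_{k+1}(x^+)$ analogously with $P_{k+1}$, $y^\top y = y^\top I y$, and the multiplier term with $M_k$ on the $\begin{pmatrix}z^\top & w^\top\end{pmatrix}$ coordinates, the gap on the right of \eqref{eq:relaxedRobustCostBound} becomes $\xi^\top N \xi$, where $N$ is precisely the congruence product displayed in \eqref{eq:robustCostInequality}. Thus \eqref{eq:relaxedRobustCostBound} reads $\xi^\top N\xi \le 0$ for every $\xi$ whose first entry equals $1$.

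It remains to argue that this holds for all $x,w$ if and only if $N\preceq 0$, i.e.\ \eqref{eq:robustCostInequality}. The converse is immediate; for the forward direction I would use homogeneity: scaling $(x,w)\mapsto(\alpha x,\alpha w)$, dividing by $\alpha^2$ and letting $\alpha\to\infty$ extends the inequality from the affine slice $\{\xi:\text{first entry}=1\}$ to all of $\bbR^{1+n+l}$, which gives $N\preceq 0$. This scaling step is the one point requiring care, since the homogeneous coordinate is pinned to $1$ in \eqref{eq:relaxedRobustCostBound} and the matrix version a priori looks stronger.

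For the second claim I would fix an arbitrary disturbance sequence $(w_k)_{k=0}^{N-1}$ admissible in the sense of \eqref{eq:disturbanceConstraint} and run the closed loop with $u_k=\pi_k(x_k)$, $x_0=\bar x$. By the first claim, \eqref{eq:robustCostInequality} is equivalent to \eqref{eq:relaxedRobustCostBound}, and the $S$-procedure argument already spelled out before the proposition turns this into the robust decrease \eqref{eq:robustCostBound}, $V_k(x_k)\ge y_k^\top y_k + V_{k+1}(x_{k+1})$, because for admissible $w_k$ the multiplier term may be dropped. Telescoping over $k=0,\dots,N-1$ yields $V_0(\bar x)\ge \sum_{k=0}^{N-1}y_k^\top y_k + V_N(x_N)$; bounding the terminal term below by \eqref{eq:terminalInequality1} and the initial term above by \eqref{eq:initialInequality1} gives $\nu \ge \sum_{k=0}^{N-1}y_k^\top y_k + \begin{pmatrix}1&x_N^\top\end{pmatrix}P_f\begin{pmatrix}1&x_N^\top\end{pmatrix}^\top$. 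As this holds for every admissible $(w_k)$, the feedback policy certifies that the optimal cost of the min--max problem \eqref{eq:robustDP} is at most $\nu$.

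For robust constraint satisfaction I would show by induction that the trajectory remains in the $\nu$-sublevel set, $V_k(x_k)\le\nu$ for all $k$: the base case is \eqref{eq:initialInequality1}, and the robust decrease gives $V_{k+1}(x_{k+1})\le V_k(x_k)-y_k^\top y_k\le V_k(x_k)\le\nu$ since $y_k^\top y_k\ge 0$. Invoking \eqref{eq:feasibility}, which is implied by \eqref{eq:constraintInequality1} via Proposition~\ref{prop:PreliminaryInequaities}, at each $x_k$ then forces $v_{ki}^\top v_{ki}\le 1$ for all $i$, and since the whole argument is uniform over admissible $(w_k)$ the constraints hold robustly. I expect the main obstacle to be the first claim, namely verifying that the particular block layout of \eqref{eq:robustCostInequality} reproduces the dissipation gap term by term and that pinning the homogeneous coordinate does not weaken the inequality; the performance and feasibility conclusions are then a standard telescoping and sublevel-set invariance argument.
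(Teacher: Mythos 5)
Your proof is correct and follows essentially the same route as the paper: left/right multiplication by $\begin{pmatrix} 1 & x^\top & w^\top \end{pmatrix}$ to identify \eqref{eq:relaxedRobustCostBound} with \eqref{eq:robustCostInequality}, then telescoping the dissipation inequality (dropping the nonnegative multiplier terms for admissible disturbances) together with \eqref{eq:initialInequality1}, \eqref{eq:terminalInequality1} for the performance bound, and sublevel-set invariance plus \eqref{eq:constraintInequality1} for robust constraint satisfaction. The only difference is that you make explicit the homogeneity/limiting argument showing that nonpositivity of the quadratic form on the affine slice with first coordinate pinned to $1$ implies the full matrix inequality, a standard detail the paper leaves implicit.
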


\begin{proof}
	To establish equivalence between \eqref{eq:robustCostInequality} and \eqref{eq:relaxedRobustCostBound}, left-multiply $\begin{pmatrix}
		1 & x^\top & w^\top 
	\end{pmatrix}$ and right-multiply its transpose to \eqref{eq:robustCostInequality}. Summing \eqref{eq:relaxedRobustCostBound} for $k = 0,\ldots,N-1$ with $x = x_k$ and $w = w_k$ chosen as a trajectory of \eqref{eq:uncertainSystem} yields
	\begin{align}
		V_0(x_0) \geq \sum_{k=0}^{N-1} \left(y_k^\top y_k + \begin{pmatrix}
			z_k\\
			w_k
		\end{pmatrix}^\top M_k
		\begin{pmatrix}
			z_k\\
			w_k
		\end{pmatrix} \right) + V_{N}(x_{N}). \label{eq:robustPerfCond}
	\end{align}
	Here, the terms $\begin{pmatrix}
		z_k\\
		w_k
	\end{pmatrix}^\top M_k
	\begin{pmatrix}
		z_k\\
		w_k
	\end{pmatrix}$ are positive for all admissible values of $w_k$ and, hence, can be omitted without violating \eqref{eq:robustPerfCond}. This establishes the inequalities
	\begin{align*}
		V_0(x_0) \geq \sum_{k=0}^{N-1} y_k^\top y_k + V_{N}(x_{N}) ~~ \text{and} ~~ V_k(x_k) \geq V_{k+1}(x_{k+1}).
	\end{align*}
	In a next step we use the relations in Proposition \ref{prop:PreliminaryInequaities}. From \eqref{eq:initialInequality1} we conclude $V_0(x_0) \leq \nu$ and from \eqref{eq:terminalInequality1} we infer
	\begin{align*}
		\nu \geq \sum_{k=0}^{N-1}  y_k^\top y_k + \begin{pmatrix}
			1\\
			x_{N}
		\end{pmatrix}^\top
		P_f
		\begin{pmatrix}
			1\\
			x_{N}
		\end{pmatrix},
	\end{align*}
	showing the robust performance bound of $\nu$ and that $x_k^\top P_k x_k \leq \nu$ holds for all $k$. The latter fact together with \eqref{eq:constraintInequality1} implies robust constraint satisfaction.
\end{proof}

As in the disturbance-free case, we are interested in matrix inequalities, which are also linear in the controller parameter. This is provided in the next theorem.

\begin{theorem}
	\label{thm:RobustPeakGainControllerSynthesis}
	Define the decision variables
	\begin{align*}
		\widetilde{P}_k &:= P_k^{-1}, \hspace{2mm} \widetilde{K}_k := K_k P_k^{-1}, \hspace{2mm} \tilde{\nu} := \nu^{-1}, \hspace{2mm} Z := \nu^{-2} \sqrt{\Sigma_0} P_0 \sqrt{\Sigma_0}
	\end{align*}
	including the slack variable $Z$, denote the matrix block
	\begin{align*}
		\begin{pmatrix}
			0 & 1 & 0 & 0 & 0\\
			B_k^2 & 0 & I & 0 & 0\\ 
			D_k^{12} & 0 & 0 & I & 0\\
			D_k^{32} & 0 & 0 & 0 & I
		\end{pmatrix}
		\begin{pmatrix}
			\widetilde{M}_k^{22} & & & & \widetilde{M}_k^{21}\\
			& \tilde{p}_{k+1}^{11} & \tilde{p}_{k+1}^{12}\\
			& \tilde{p}_{k+1}^{21} & \widetilde{P}_{k+1}^{22}\\
			& & & I\\
			\widetilde{M}_k^{12} & & & & \widetilde{M}_k^{11}
		\end{pmatrix}
		(\bullet)^\top
	\end{align*}
	by $\widetilde{Q}_k$ and introduce the notation
	\begin{align*}
		\begin{pmatrix}
			\tilde{f}_k^K & \widetilde{A}_k^K\\
			\tilde{g}_k^{1K} & \widetilde{C}_k^{1K}\\
			\tilde{g}_k^{3K} & \widetilde{C}_k^{3K}
		\end{pmatrix}
		:=
		\begin{pmatrix}
			f_k & A_k & B_k^1\\
			g_k^1 & C_k^1 & D_k^{11}\\
			g_k^3 & C_k^3 & D_k^{31}
		\end{pmatrix}
		\begin{pmatrix}
			\tilde{p}_k^{11} & \tilde{p}_k^{12}\\
			\tilde{p}_k^{21} & \widetilde{P}_k^{22}\\
			\tilde{k}_k^1 & \widetilde{K}_k^2
		\end{pmatrix}.
	\end{align*}
	Then the matrix inequalities \eqref{eq:robustCostInequality}, \eqref{eq:initialInequality1}, \eqref{eq:terminalInequality1} and \eqref{eq:constraintInequality1} are equivalent to the linear matrix inequalities
	\begin{align}
		\begin{pmatrix}
			\tilde{q}_k^{11} & \tilde{q}_k^{12} & \tilde{q}_k^{13} & \tilde{q}_k^{14} & \tilde{p}_k^{11} & \tilde{p}_k^{12}\\
			\tilde{q}_k^{21} & \widetilde{Q}_k^{22} & \widetilde{Q}_k^{23} & \widetilde{Q}_k^{24} & \tilde{f}_k^K & \widetilde{A}_k^K\\
			\tilde{q}_k^{31} & \widetilde{Q}_k^{32} & \widetilde{Q}_k^{33} & \widetilde{Q}_k^{34} & \tilde{g}_k^{1K} & \widetilde{C}_k^{1K}\\
			\tilde{q}_k^{41} & \widetilde{Q}_k^{42} & \widetilde{Q}_k^{43} & \widetilde{Q}_k^{44} & \tilde{g}_k^{3K} & \widetilde{C}_k^{3K}\\
			\tilde{p}_k^{11} & (\tilde{f}_k^K)^\top & (\tilde{g}_k^{1K})^\top & (\tilde{g}_k^{3K})^\top & \tilde{p}_k^{11} & \tilde{p}_k^{12}\\
			\tilde{p}_k^{21} & (\widetilde{A}_k^K)^\top & (\widetilde{C}_k^{1K})^\top & (\widetilde{C}_k^{3K})^\top & \tilde{p}_k^{21} & \widetilde{P}_k^{22}
		\end{pmatrix}\succeq 0,
		\label{eq:robustLinearCostInequality}
	\end{align}
	\eqref{eq:linearInitialInequality}, \eqref{eq:linearTerminalInequality} and \eqref{eq:constraintInequality2} for positive definite $P_k$ and $\widetilde{P}_k$.
\end{theorem}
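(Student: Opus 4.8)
The plan is to split the four target LMIs into those inherited unchanged from the uncertainty-free case and the single genuinely new one. The inequalities \eqref{eq:initialInequality1}, \eqref{eq:terminalInequality1} and \eqref{eq:constraintInequality1} are literally the same as in Proposition \ref{prop:PreliminaryInequaities}, so their equivalence with \eqref{eq:linearInitialInequality}, \eqref{eq:linearTerminalInequality} and \eqref{eq:constraintInequality2} is already established inside the proof of Theorem \ref{thm:PeakGainControllerSynthesis}: the congruence with $\widetilde P_k = P_k^{-1}$ (introducing $\widetilde K_k = K_k P_k^{-1}$), the inversion of the terminal bound, and the Schur complements for the initial-state and constraint conditions carry over verbatim. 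Hence all the real work concentrates on convexifying the robust increment condition \eqref{eq:robustCostInequality} into \eqref{eq:robustLinearCostInequality}.

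For \eqref{eq:robustCostInequality} I would proceed in two stages, mirroring the non-robust proof but inserting a dualization step for the multiplier. Write \eqref{eq:robustCostInequality} as $R^\top \Theta R \preceq 0$, where $\Theta$ collects in non-overlapping blocks the Lyapunov certificate $-P_k$, the next-step certificate $P_{k+1}$, the cost identity $I$ and the full-block multiplier $M_k$, and where $R$ collects the affine closed-loop data mapping $(1,x,w)$ to the next state $x^+$, the performance output $y$, the uncertainty-channel output $z$ and $w$. First I would apply a congruence with $\widetilde P_k$ on the state coordinate and substitute $\widetilde K_k = K_k P_k^{-1}$; exactly as in Theorem \ref{thm:PeakGainControllerSynthesis} this turns $-P_k$ into $-\widetilde P_k$ and replaces all controller-dependent data $f_k^K, A_k^K, g_k^{1K}, \dots$ by their tilde versions $\tilde f_k^K, \widetilde A_k^K, \tilde g_k^{1K}, \dots$, while leaving the output-side block ($M_k$, $P_{k+1}$, $I$) untouched.

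The second stage is the crux: the output-side block must be inverted so that the decision variables enter affinely. Because this block contains the indefinite multiplier $M_k$, a plain Schur complement on a sign-definite pivot does not suffice, and I would instead invoke the dualization lemma. The hypotheses $M_k^{11} \succ 0$, $M_k^{22} \prec 0$ and invertibility of $M_k$, together with $P_{k+1} \succ 0$ and the identity, supply the signature that $\Theta$ needs for dualization, so that $R^\top \Theta R \preceq 0$ becomes a dual inequality $\widetilde R^\top \Theta^{-1} \widetilde R \succeq 0$. Since the output-side block is block diagonal, its inverse reassembles precisely the dualized quantities $\widetilde M_k = M_k^{-1}$ (whose existence and membership in $\calM'$ is assumed), $\widetilde P_{k+1} = P_{k+1}^{-1}$ and $I$; conjugating these with the disturbance-channel matrices $B_k^2, D_k^{12}, D_k^{32}$ reproduces exactly the block $\widetilde Q_k$ of the statement, while the $-P_k$ block dualizes to the bottom-right $\widetilde P_k$ and the affine maps move into the off-diagonal. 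A final Schur complement assembles the aggregate LMI \eqref{eq:robustLinearCostInequality} and flips the overall sign to $\succeq 0$.

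I expect the main obstacle to be the bookkeeping of this dualization: checking that $\Theta$ is nonsingular with the correct inertia, that the dual factor $\widetilde R$ realizes the disturbance channel in the claimed pattern of $\widetilde Q_k$, and that the block inverse of the output-side matrix reproduces the exact interleaving of $\widetilde M_k^{11}, \widetilde M_k^{12}, \widetilde M_k^{21}, \widetilde M_k^{22}$ with $\widetilde P_{k+1}$ and $I$. The conceptual content — congruence for the Lyapunov part and dualization for the multiplier part — is standard full-block $S$-procedure synthesis; the risk lies entirely in matching indices and signs so that the two operations compose into the single symmetric LMI \eqref{eq:robustLinearCostInequality}.
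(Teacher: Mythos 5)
Your proposal is correct and follows essentially the same route as the paper: the paper likewise treats \eqref{eq:initialInequality1}, \eqref{eq:terminalInequality1} and \eqref{eq:constraintInequality1} as identical to Theorem \ref{thm:PeakGainControllerSynthesis}, and converts \eqref{eq:robustCostInequality} into \eqref{eq:robustLinearCostInequality} by applying the dualization lemma (Lemma 4 of \cite{iwasaki1998well}) and then a Schur complement. Your preliminary congruence with $\widetilde{P}_k$ is redundant --- the paper dualizes \eqref{eq:robustCostInequality} directly, and the tilde-transformed closed-loop data emerges automatically from the Schur complement step --- but it commutes with the dualization and does not affect correctness.
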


\begin{proof}
	Applying the dualization lemma (Lemma 4, \cite{iwasaki1998well}) to \eqref{eq:robustCostInequality} proves that \eqref{eq:robustCostInequality} holds if and only if
	\begin{align*}
		\resizebox{\linewidth}{!}{$
			\begin{pmatrix}
				1 & 0 & 0 & 0 & 1 & 0 & 0\\
				0 & I & 0 & 0 & f_k^K & A_k^K & B_k^2\\ 
				0 & 0 & I & 0 & g_k^{1K} & C_k^{1K} & D_k^{12}\\
				0 & 0 & 0 & I & g_k^{2K} & C_k^{2K} & D_k^{22}
			\end{pmatrix}
			\begin{pmatrix}
				-\tilde{p}_k^{11} & -\tilde{p}_k^{12}\\
				- \tilde{p}_k^{21} & - \widetilde{P}_k^{22}\\
				& & \widetilde{M}_k^{22} & & & & \widetilde{M}_k^{21}\\
				& & & \tilde{p}_{k+1}^{11} & \tilde{p}_{k+1}^{12}\\
				& & & \tilde{p}_{k+1}^{21} & \widetilde{P}_{k+1}^{22}\\
				& & & & & I\\
				& & \widetilde{M}_k^{12} & & & & \widetilde{M}_k^{11}
			\end{pmatrix}
			(\bullet)$}
	\end{align*}
	is positive semi-definite. By invoking the Schur complement we obtain \eqref{eq:robustLinearCostInequality}. The remaining matrix inequalities are identical to those in Theorem \ref{thm:PeakGainControllerSynthesis} and proven analogously.
\end{proof}

With this theorem, we can perform LMI synthesis of robust state-feedback controllers subject to constraints for finite horizon problems by solving the SDP
\begin{align}
	\maximize_{\widetilde{P}_k,\widetilde{K}_k,\tilde{\nu},Z,\widetilde{M}_k \in \calM'} ~&~ \tilde{\nu} \label{eq:relaxedRobustDP}\\
	\mathrm{s.t.} ~&~ \eqref{eq:robustLinearCostInequality} \text{ for } k = 0,\ldots,N-1, \nonumber\\
	~&~ \eqref{eq:constraintInequality2} \text{ for } k=0,\ldots, N -1, \nonumber\\
	~&~ \eqref{eq:linearInitialInequality} \text{ and } \eqref{eq:linearTerminalInequality}. \nonumber
\end{align}

\section{The infinite horizon case with uncertainty}
\label{sec:4}

Now, we consider the infinite horizon problem 
\begin{align}
	\abrmin_{u_0 \in \calU_0} \abrmax_{w_0 \in \calW_0} & \abrmin_{u_{1} \in \calU_{1}} \abrmax _{w_{1} \in \calW_{1}} \ldots ~ \sum_{k=0}^{\infty} y_k^\top y_k \label{eq:robustinfiniteDP}\\
	\mathrm{s.t.} ~&~ x_{k+1} = f_k + A_k x_k + B_k^1 u_k + B_k^2 w_k, \nonumber\\
	~&~ y_k = g_k^1 + C_k^1 x_k + D_k^{11} u_k + D_k^{12} w_k, \nonumber\\
	~&~ v_{ki} = g_{ki}^2 + C_{ki}^2 x_k + D_{ki}^{21} u_k, \qquad i = 1,\ldots,s,\nonumber\\
	~&~ z_k = g_k^2 + C_k^2 x_k + D_k^{21} u_k + D_k^{22} w_k, \nonumber\\
	~&~ x_0 = \bar{x}, \nonumber
\end{align}
where the constraint $u_k \in \calU_k(x_k)$ is described by \eqref{eq:controlConstraint} and the constraint $w_k \in \calW_k(z_k)$ is described by \eqref{eq:disturbanceConstraint}. To obtain a tractable relaxation of this problem, we assume that for $k \geq N \in \bbN_0$ the problem parameters are not changing in $k$. Denoting the parameters at time $k$ by
\begin{align*}
	G_k :=
	\begin{pmatrix}
		f_k & A_k & B_k^1 & B_k^2\\
		g_k^1 & C_k^1 & D_k^{11} & D_k^{12} \\
		g_{k i}^2 & C_{k i}^2 & D_{k i}^{21} & 0\\
		g_k^3 & C_k^3 & D_k^{31} & D_k^{32}
	\end{pmatrix},
\end{align*}
this means $G_k = G_{N} ~ \forall k \geq N$. The assumption of constant problem parameters for $k \geq N$ can be well justified, e.g., if the data $(G_k)_{k=0}^\infty$ results from the linearization of a nonlinear system around a reference trajectory, which is at an equilibrium for $k \geq N$. Accordingly, our strategy for solving \eqref{eq:robustinfiniteDP} consists of searching for a function parametrized by $(P_k)_{k=0}^\infty$ and a controller parametrized by $(K_k)_{k=0}^\infty$ which are stationary for $k \geq N$, i.e.,
\begin{align*}
	(P_0,P_1,P_2,\ldots) &= (P_0,\ldots,P_{N-1},P_{N},P_{N},P_{N},\ldots),\\
	(K_0,K_1,K_2,\ldots) &= (K_0,\ldots,K_{N-1},K_{N},K_{N},K_{N},\ldots).
\end{align*}
In the infinite horizon case we need to require \eqref{eq:relaxedRobustCostBound} and \eqref{eq:levelSetBound} for all $k \in \bbN_{\geq N}$ instead of enforcing only the terminal condition \eqref{eq:terminalConstraint1}. To render this requirement tractable, we exploit the stationarity assumption on $P_k$ and $K_k$ and reduce this infinite family to the two constraints
\begin{align}
	V_{N}(x) &\geq y^\top y + \begin{pmatrix}
		z\\
		w
	\end{pmatrix}^\top M_{N}
	\begin{pmatrix}
		z\\
		w
	\end{pmatrix}
	+
	V_{N}(x^+) \label{eq:relaxedRobustTailCost}\\
	\mathrm{for} ~&~ x^+ = f_{N} + B_{N}^1 k_{N}^1 + (A_{N} + B_{N}^1K_{N}^2)x + B_{N}^2 w, \nonumber\\
	~&~ y = g_{N}^1 + D_{N}^{11} k_{N}^1 + (C_{N}^1 + D_{N}^{11} K_{N}^2)x + D_{N}^{12} w, \nonumber\\
	~&~ z = g_{N}^3 + D_{N}^{31}k_{N}^1 (C_{N}^3 + D_{N}^{31} K_{N}) x + D_{N}^{32} w,\nonumber\\
	V_{N}(x) &\leq \nu \Rightarrow v_{i}^\top v_{i} \leq 1, \label{eq:feasibilityForTail}\\
	\mathrm{for} ~&~ v_{i} = g_{{N} i}^2 + D_{{N} i}^2k_{N} + (C_{{N} i}^2 + D_{{N} i}^2 K_{N}) x, \nonumber
\end{align}
for all $x \in \bbR^n$, $w \in \bbR^l$. The constraint \eqref{eq:feasibilityForTail} corresponds to \eqref{eq:feasibility} for $k = N$. However, \eqref{eq:relaxedRobustTailCost} differs from \eqref{eq:robustCostBound}, since the same function $V_{N}$ appears on both sides of the inequality. Nonetheless, we can still express \eqref{eq:relaxedRobustTailCost} as matrix inequality and convexify it, as shown in the following result.

\begin{proposition}
	If the functions $V_k$ are parametrized as in \eqref{eq:LyapParametrization} with stationary sequences as above, then the conditions \eqref{eq:relaxedRobustCostBound},\eqref{eq:feasibility} are satisfied for all $k \in \bbN_0$, if \eqref{eq:robustCostInequality} holds for $k = 0,\ldots,N-1$, \eqref{eq:constraintInequality1} holds for $k = 0,\ldots,N$, and $P_{N}$, $M_{N}$ satisfy the matrix inequality
	\begin{align}
		&\resizebox{\linewidth}{!}{$
			(\bullet)^\top
			\begin{pmatrix}
				-p_{N}^{11} & -p_{N}^{12}\\
				- p_{N}^{21} & - P_{N}^{22}\\
				& & M_{N}^{22} & & & & M_{N}^{21}\\
				& & & p_{N}^{11} & p_{N}^{12}\\
				& & & p_{N}^{21} & P_{N}^{22}\\
				& & & & & I\\
				& & M_{N}^{12} & & & & M_{N}^{11}
			\end{pmatrix}
			\begin{pmatrix}
				1 & 0 & 0\\
				0 & I & 0\\
				0 & 0 & I\\
				1 & 0 & 0\\
				f_{N}^K & A_{N}^K & B_{N}^2\\ 
				g_{N}^{1K} & C_{N}^{1K} & D_{N}^{12}\\
				g_{N}^{3K} & C_{N}^{3K} & D_{N}^{32}
			\end{pmatrix}$}\nonumber\\
		&\hspace{65mm} \preceq 0. \label{eq:infiniteCostInequality}
	\end{align}
\end{proposition}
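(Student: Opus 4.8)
The plan is to separate the infinitely many requirements \eqref{eq:relaxedRobustCostBound} and \eqref{eq:feasibility} into a transient block $k = 0,\dots,N-1$ and a stationary tail $k \ge N$, and to show that the stationarity of $(P_k)$ and $(K_k)$ collapses the entire tail into the single matrix inequality \eqref{eq:infiniteCostInequality}. Once this reduction is in place, the claim follows by combining it with the finite-horizon characterizations already established in Propositions \ref{prop:PreliminaryRobustInequaities} and \ref{prop:PreliminaryInequaities}.

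For the transient range I would invoke those results verbatim: for each $k \le N-1$, Proposition \ref{prop:PreliminaryRobustInequaities} gives \eqref{eq:robustCostInequality} $\Leftrightarrow$ \eqref{eq:relaxedRobustCostBound}, and for each $k \le N$, Proposition \ref{prop:PreliminaryInequaities} gives \eqref{eq:constraintInequality1} $\Rightarrow$ \eqref{eq:feasibility}. This already covers \eqref{eq:feasibility} for $k = 0,\dots,N$ and \eqref{eq:relaxedRobustCostBound} for $k = 0,\dots,N-1$, so only the cost-decrease condition from $k=N$ onward remains. Here I would use stationarity: since $G_k = G_N$, $K_k = K_N$ and, by the assumed form of the sequences, $V_k = V_{k+1} = V_N$ for every $k \ge N$, the inequality \eqref{eq:relaxedRobustCostBound} written at any such $k$ is one and the same inequality, namely \eqref{eq:relaxedRobustTailCost}, in which the single function $V_N$ appears on both sides. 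To turn \eqref{eq:relaxedRobustTailCost} into a matrix inequality I would repeat the congruence argument of Proposition \ref{prop:PreliminaryRobustInequaities}: left-multiplying \eqref{eq:infiniteCostInequality} by $\begin{pmatrix} 1 & x^\top & w^\top \end{pmatrix}$ and right-multiplying by its transpose reproduces exactly $V_N(x) \ge y^\top y + \begin{pmatrix} z \\ w \end{pmatrix}^\top M_N \begin{pmatrix} z \\ w \end{pmatrix} + V_N(x^+)$ for all $x,w$, and conversely the matrix inequality follows from the pointwise one by homogeneity in $\begin{pmatrix} 1 & x^\top & w^\top \end{pmatrix}$. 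The only structural change relative to \eqref{eq:robustCostInequality} is that the successor block $P_{k+1}$ is replaced by $P_N$, which is precisely the effect of $V_N$ recurring on both sides; the different row order of the outer factor in \eqref{eq:infiniteCostInequality} is a permutation that makes this block assignment conformable and leaves the induced quadratic form unchanged.

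Finally I would propagate to all $k > N$: because every piece of data entering \eqref{eq:relaxedRobustCostBound} and \eqref{eq:feasibility} at index $k > N$ coincides with that at $k = N$ (same parameters $G_k = G_N$, same controller $K_N$, same function $V_N$, and, choosing $M_k := M_N$, the same multiplier), the statements at $k > N$ are literally identical to those at $k = N$ and therefore hold as well; similarly \eqref{eq:constraintInequality1} at $k=N$ yields \eqref{eq:feasibility} for all $k \ge N$ via \eqref{eq:feasibilityForTail}. Together with the transient block this delivers \eqref{eq:relaxedRobustCostBound} and \eqref{eq:feasibility} for all $k \in \bbN_0$. I do not expect a genuine obstacle; the one point demanding care is the self-referential tail inequality \eqref{eq:relaxedRobustTailCost}, where the same $V_N$ sits on the left and inside $V_N(x^+)$ on the right. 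One must verify that collapsing $P_{k+1} \mapsto P_N$ in the middle factor of \eqref{eq:robustCostInequality} still produces a well-posed congruence and an honest LMI in the stationary variables $P_N$, $K_N$ and $M_N$, which it does, since the substitution merely identifies two blocks that were previously independent.
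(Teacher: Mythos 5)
Your proposal is correct and follows essentially the same route as the paper, which omits the proof by noting it ``relies on the same arguments as the proof of Proposition~\ref{prop:PreliminaryRobustInequaities}'': you invoke that proposition verbatim for the transient indices, and for the stationary tail you repeat its congruence argument with $\begin{pmatrix} 1 & x^\top & w^\top \end{pmatrix}$ applied to \eqref{eq:infiniteCostInequality}, where stationarity collapses all $k \geq N$ into the single self-referential inequality \eqref{eq:relaxedRobustTailCost} with $P_{k+1}$ replaced by $P_N$. Your filled-in details (homogeneity for the converse direction, choosing $M_k := M_N$ for the tail, and \eqref{eq:constraintInequality1} at $k=N$ covering \eqref{eq:feasibility} for all $k \geq N$) are exactly what the paper leaves implicit.
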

\begin{proof}
	The proof relies on the same arguments as the proof of Proposition \ref{prop:PreliminaryRobustInequaities} and is thus omitted.
\end{proof}

Essentially, the matrix inequality \eqref{eq:infiniteCostInequality} replaces the terminal condition \eqref{eq:terminalInequality1}. It ensures an upper bound on the infinite tail cost and robust constraint satisfaction for $k \geq N$. Next, we derive a linear formulation of this matrix inequality providing a convex relaxation of \eqref{eq:robustinfiniteDP}.

\begin{theorem}
	\label{thm:infiniteRobustPeakGainControllerSynthesis}
	Define the decision variables
	\begin{align*}
		\widetilde{P}_k &:= P_k^{-1}, \hspace{2mm} \widetilde{K}_k := K_k P_k^{-1}, \hspace{2mm} \tilde{\nu} := \nu^{-1}, \hspace{2mm} Z := \nu^{-2} \sqrt{\Sigma_0} P_0 \sqrt{\Sigma_0}
	\end{align*}
	including the slack variable $Z$, denote the matrix block
	\begin{align*}
		\begin{pmatrix}
			0 & 1 & 0 & 0 & 0\\
			B_{N}^2 & 0 & I & 0 & 0\\ 
			D_{N}^{12} & 0 & 0 & I & 0\\
			D_{N}^{22} & 0 & 0 & 0 & I
		\end{pmatrix}
		\begin{pmatrix}
			\widetilde{M}_{N}^{22} & & & & \widetilde{M}_{N}^{21}\\
			& \tilde{p}_{N}^{11} & \tilde{p}_{N}^{12}\\
			& \tilde{p}_{N}^{21} & \widetilde{P}_{N}^{22}\\
			& & & \tilde{\nu} I\\
			\widetilde{M}_{N}^{12} & & & & \widetilde{M}_{N}^{11}
		\end{pmatrix}
		(\bullet)^\top
	\end{align*}
	by $\widetilde{Q}_{N}$ and introduce the new notation
	\begin{align*}
		\begin{pmatrix}
			\tilde{f}_{N}^K & \widetilde{A}_{N}^K\\
			\tilde{g}_{N}^{1K} & \widetilde{C}_{N}^{1K}\\
			\tilde{g}_{N}^{2K} & \widetilde{C}_{N}^{2K}
		\end{pmatrix}
		:=
		\begin{pmatrix}
			f_{N} & A_{N} & B_{N}^1\\
			g_{N}^1 & C_{N}^1 & D_{N}^{11}\\
			g_{N}^2 & C_{N}^2 & D_{N}^{21}
		\end{pmatrix}
		\begin{pmatrix}
			\tilde{p}_{N}^{11} & \tilde{p}_{N}^{12}\\
			\tilde{p}_{N}^{21} & \widetilde{P}_{N}^{22}\\
			\tilde{k}_{N}^1 & \widetilde{K}_{N}^2
		\end{pmatrix}.
	\end{align*}
	Then the matrix inequalities \eqref{eq:robustCostInequality}, \eqref{eq:initialInequality1}, \eqref{eq:infiniteCostInequality} and \eqref{eq:constraintInequality1} are equivalent to the LMIs \eqref{eq:robustLinearCostInequality}, \eqref{eq:linearInitialInequality}, \eqref{eq:constraintInequality2} and
	\begin{align}
		\begin{pmatrix}
			\tilde{q}_{N}^{11} & \tilde{q}_{N}^{12} & \tilde{q}_{N}^{13} & \tilde{q}_{N}^{14} & \tilde{p}_{N}^{11} & \tilde{p}_{N}^{12}\\
			\tilde{q}_{N}^{21} & \widetilde{Q}_{N}^{22} & \widetilde{Q}_{N}^{23} & \widetilde{Q}_{N}^{24} & \tilde{f}_{N}^K & \widetilde{A}_{N}^K\\
			\tilde{q}_{N}^{31} & \widetilde{Q}_{N}^{32} & \widetilde{Q}_{N}^{33} & \widetilde{Q}_{N}^{34} & \tilde{g}_{N}^{1K} & \widetilde{C}_{N}^{1K}\\
			\tilde{q}_{N}^{41} & \widetilde{Q}_{N}^{42} & \widetilde{Q}_{N}^{43} & \widetilde{Q}_{N}^{44} & \tilde{g}_{N}^{2K} & \widetilde{C}_{N}^{2K}\\
			\tilde{p}_{N}^{11} & (\tilde{f}_{N}^K)^\top & (\tilde{g}_{N}^{1K})^\top & (\tilde{g}_{N}^{2K})^\top & \tilde{p}_{N}^{11} & \tilde{p}_{N}^{12}\\
			\tilde{p}_{N}^{21} & (\widetilde{A}_{N}^K)^\top & (\widetilde{C}_{N}^{1K})^\top & (\widetilde{C}_{N}^{2K})^\top & \tilde{p}_{N}^{21} & \widetilde{P}_{N}^{22}
		\end{pmatrix}\succeq 0.
		\label{eq:infiniteRobustLinearCostInequality}
	\end{align}
\end{theorem}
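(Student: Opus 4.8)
The plan is to split the claimed equivalence into a part that is already in hand and a single genuinely new part. Three of the four pairings---\eqref{eq:robustCostInequality} with \eqref{eq:robustLinearCostInequality}, \eqref{eq:initialInequality1} with \eqref{eq:linearInitialInequality}, and \eqref{eq:constraintInequality1} with \eqref{eq:constraintInequality2}---are exactly the matrix inequalities and convexifications already established in Theorem~\ref{thm:RobustPeakGainControllerSynthesis} and Theorem~\ref{thm:PeakGainControllerSynthesis}, so their equivalence transfers verbatim and requires no new argument. Consequently I would concentrate the entire proof on the one new pairing, namely the equivalence of the tail condition \eqref{eq:infiniteCostInequality} with its convexification \eqref{eq:infiniteRobustLinearCostInequality}.

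For this remaining pairing I would follow the proof of Theorem~\ref{thm:RobustPeakGainControllerSynthesis} step for step. The middle matrix $\Pi$ in \eqref{eq:infiniteCostInequality} has precisely the block pattern of the one in \eqref{eq:robustCostInequality}; the sole structural difference is that the single matrix $P_N$ now sits in both the $-P_N$ block (arising from the left-hand $V_N$) and the $+P_N$ block (arising from the right-hand $V_N$), alongside the multiplier block $M_N$. First I would apply the dualization lemma (Lemma~4 of~\cite{iwasaki1998well}): since $P_N\succ0$ and $M_N$ is invertible, $\Pi$ is nonsingular and $\Pi^{-1}$ is the block matrix carrying $-\widetilde{P}_N$, $\widetilde{M}_N$, $\widetilde{P}_N$ and $I$. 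This converts \eqref{eq:infiniteCostInequality} into a definiteness condition on the complementary outer factor. Then, introducing $\widetilde{P}_N=P_N^{-1}$, $\widetilde{K}_N=K_NP_N^{-1}$ and $\widetilde{M}_N=M_N^{-1}$ so that the products $\tilde{f}_N^K,\widetilde{A}_N^K,\dots$ of the statement appear, and taking the Schur complement, I would arrive at \eqref{eq:infiniteRobustLinearCostInequality}.

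The one point that is not a mechanical copy, and therefore the main thing to verify, is that the dualization lemma still applies when $P_N$ is repeated. I expect this to be settled by an inertia count rather than by any new idea. The negative eigenvalues of $\Pi$ stem only from the block $-P_N$ (contributing $1+n$) and from the $w$-part of the multiplier (contributing $l$, using $M_N^{11}\succ0$ and $M_N^{22}\prec0$, so that $\mathrm{inertia}(M_N)=(\dim z,\,l)$). Hence $\Pi$ has negative inertia $1+n+l$, which matches the number of columns of the outer factor, i.e.\ the dimension of the $(1,x,w)$-space on which it acts. The additional $+P_N$ block is positive definite and thus contributes only to the positive inertia; it does not enlarge the negative inertia, so the image of the outer factor remains a maximal $\Pi$-nonpositive subspace and the dualization equivalence goes through exactly as in the finite-horizon case. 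After dualization both Lyapunov blocks legitimately carry the same variable $\widetilde{P}_N$, which is consistent with the stationarity ansatz $P_k=P_N$ for $k\ge N$ behind \eqref{eq:relaxedRobustTailCost}.

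Everything else is mechanical and parallels Theorems~\ref{thm:PeakGainControllerSynthesis} and~\ref{thm:RobustPeakGainControllerSynthesis}; the only bookkeeping that deserves attention is to carry the stationary disturbance-channel data $(g_N^2,C_N^2,D_N^{21},D_N^{22})$ and the performance channel through the dualization and Schur-complement steps consistently, exactly as was done for the finite-horizon inequalities.
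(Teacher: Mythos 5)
Your proposal is correct and takes essentially the same route as the paper, whose entire proof of this theorem is the remark that it ``follows the same lines'' as Theorem~\ref{thm:RobustPeakGainControllerSynthesis}: dualization (Lemma 4 of \cite{iwasaki1998well}) followed by a Schur complement applied to the tail inequality \eqref{eq:infiniteCostInequality}, with the other three pairings inherited verbatim from Theorems~\ref{thm:PeakGainControllerSynthesis} and~\ref{thm:RobustPeakGainControllerSynthesis}. Your explicit inertia count showing that the repeated block $P_N$ (once as $-P_N$, once as $+P_N$) leaves the negative inertia of the middle matrix equal to $1+n+l$ is a worthwhile detail the paper leaves implicit; note only that carrying the dualization out mechanically gives $I$ rather than the $\tilde{\nu} I$ printed in the $(4,4)$ position of $\widetilde{Q}_N$, which appears to be a typo in the theorem statement rather than a gap in your argument.
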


\begin{proof}
	The proof follows the same lines as the one of Theorem \ref{thm:RobustPeakGainControllerSynthesis} and is omitted.
\end{proof}

Using Theorem \ref{thm:infiniteRobustPeakGainControllerSynthesis} we now formulate a convex relaxation of \eqref{eq:robustinfiniteDP}. In this problem, we maximize over the variable $\tilde{\nu}$, whose inverse provides a robust upper bound on the cost in \eqref{eq:robustinfiniteDP}. The resulting convex program is
\begin{align}
	\maximize_{\widetilde{P}_k,\widetilde{K}_k,\tilde{\nu},Z,\widetilde{M}_k \in \calM'} ~&~ \tilde{\nu} \label{eq:relaxedRobustInfiniteDP}\\
	\mathrm{s.t.} ~&~ \eqref{eq:robustLinearCostInequality} \text{ for } k = 0,\ldots,N-1, \nonumber\\
	~&~ \eqref{eq:constraintInequality2} \text{ for } k=0,\ldots, N, \nonumber\\
	~&~ \eqref{eq:linearInitialInequality} \text{ and } \eqref{eq:infiniteRobustLinearCostInequality}. \nonumber
\end{align}

\section{Receding horizon control with uncertainty}
\label{sec:5}

Resulting from the controller synthesis problem \eqref{eq:relaxedRobustInfiniteDP} we can define the receding horizon controller
\begin{align*}
	\pi_{j}^{\text{MPC}}(\bar{x}_j) = \widetilde{K}_{0,j} \widetilde{P}_{0,j}^{-1} \begin{pmatrix}
		1\\ \bar{x}_j
	\end{pmatrix} = K_{0,j} \begin{pmatrix}
	1\\ \bar{x}_j
\end{pmatrix}
\end{align*}
where $\widetilde{K}_{0,j}$ and $\widetilde{P}_{0,j}$ are obtained by solving \eqref{eq:relaxedRobustInfiniteDP} for the problem data $(G_{k,j})_{k=0}^{N} := (G_{j+k})_{k=0}^{N}$. 
Our final result is the recursive feasibility, robust convergence and robust constraint satisfaction of $\pi_{j}^{\text{MPC}}$. We highlight that $\pi_{j}^{\text{MPC}}$ is defined simply as the solution to problem \eqref{eq:relaxedRobustInfiniteDP}, which is recursively feasible by default. This property is a consequence of the constraint \eqref{eq:infiniteRobustLinearCostInequality}, which can be interpreted as a terminal ingredient for $\pi_{j}^{\text{MPC}}$.

\begin{theorem}[Recursive feasibility and convergence]
	\label{thm:MPC}
	If \eqref{eq:relaxedRobustInfiniteDP} is feasible at $\bar{x}_0$, then \eqref{eq:relaxedRobustInfiniteDP} is feasible at all states $\bar{x}_j$ with $j\in \bbN$ defined by the closed loop
	\begin{align}
		\bar{x}_{j+1} &= f_j + A_j \bar{x}_j + B_j^1 \pi_j^{\text{MPC}}(\bar{x}_j) + B_j^2 w_j \label{eq:recedingHorizonDynamics}
	\end{align}
	for any realization of $(w_j)_{j=0}^\infty$ satisfying \eqref{eq:disturbanceConstraint}. Furthermore, the constraints \eqref{eq:controlConstraint} are robustly satisfied and the signal $(y_j)_{j=0}^\infty$ converges to zero.
\end{theorem}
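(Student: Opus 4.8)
The plan is to argue entirely in the physical variables $(P_{k,j},K_{k,j},M_{k,j},\nu_j)$, which by Theorem~\ref{thm:infiniteRobustPeakGainControllerSynthesis} make feasibility of \eqref{eq:relaxedRobustInfiniteDP} at $\bar{x}_j$ equivalent to solvability of the conditions \eqref{eq:robustCostInequality} for $k=0,\ldots,N-1$, \eqref{eq:initialInequality1}, \eqref{eq:infiniteCostInequality}, and \eqref{eq:constraintInequality1} for $k=0,\ldots,N$, with $P_{k,j}\succ0$. Write $\nu_j^\star$ for the optimal value of \eqref{eq:relaxedRobustInfiniteDP} at $\bar{x}_j$ (recall we minimize $\nu$) and let $V_{k,j}$ be the associated value functions \eqref{eq:LyapParametrization}. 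The whole theorem follows once I exhibit, at each step, a feasible point at $\bar{x}_{j+1}$ obtained by shifting the step-$j$ solution, and show that $\nu_j^\star$ is a Lyapunov function for the closed loop. I would establish recursive feasibility and the value decrease in one stroke, then read off convergence and constraint satisfaction.

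For the shift I set $P_{k,j+1}^{\mathrm{c}}:=P_{k+1,j}$, $K_{k,j+1}^{\mathrm{c}}:=K_{k+1,j}$, $M_{k,j+1}^{\mathrm{c}}:=M_{k+1,j}$ for $k=0,\ldots,N-1$ and reuse the stationary tail $P_{N,j+1}^{\mathrm{c}}:=P_{N,j}$, $K_{N,j+1}^{\mathrm{c}}:=K_{N,j}$, $M_{N,j+1}^{\mathrm{c}}:=M_{N,j}$. Because the receding data satisfy $G_{k,j+1}=G_{k+1,j}$ and, by the stationarity assumption $G_k=G_N$ for $k\ge N$, the terminal data obey $G_{N,j+1}=G_{N,j}=G_N$, each shifted condition collapses onto one that already holds at step $j$: the cost inequalities \eqref{eq:robustCostInequality} for $k=0,\ldots,N-2$ reproduce those at $k=1,\ldots,N-1$, while the cost inequality at $k=N-1$ and the tail inequality at $k=N$ both reduce to the stationary tail \eqref{eq:infiniteCostInequality} of step $j$. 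This interlocking of the self-referential terminal condition \eqref{eq:infiniteCostInequality} (the same $P_{N,j}$ on both sides) with the cost recursion is the crux of the argument and the step I expect to require the most care. The constraint inequalities \eqref{eq:constraintInequality1} shift likewise; since their right-hand side is $\nu$ times a positive semidefinite matrix, they remain valid when the level is lowered, so any $\nu_{j+1}^{\mathrm{c}}\le\nu_j^\star$ is admissible for them.

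It remains to fix $\nu_{j+1}^{\mathrm{c}}$ through the initial condition \eqref{eq:initialInequality1}, which for the shifted point reads $\nu_{j+1}^{\mathrm{c}}\ge\trace P_{1,j}\Sigma_{0,j+1}=V_{1,j}(\bar{x}_{j+1})$; I therefore take $\nu_{j+1}^{\mathrm{c}}:=V_{1,j}(\bar{x}_{j+1})$. Evaluating the step-$j$ increment \eqref{eq:relaxedRobustCostBound} at $k=0$ along the actual closed loop \eqref{eq:recedingHorizonDynamics}, with state $\bar{x}_j$, applied input $\pi_j^{\mathrm{MPC}}(\bar{x}_j)$ and realized disturbance $w_j$, yields $V_{0,j}(\bar{x}_j)\ge y_j^\top y_j+s_j+V_{1,j}(\bar{x}_{j+1})$, where $s_j$ is the $M_{0,j}$-weighted quadratic form in $(z_j,w_j)$ and $x_1=\bar{x}_{j+1}$. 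Since $w_j$ satisfies \eqref{eq:disturbanceConstraint} and $M_{0,j}\in\calM$, the term $s_j$ is nonnegative and may be dropped; together with $V_{0,j}(\bar{x}_j)\le\nu_j^\star$ from \eqref{eq:initialInequality1}, this gives $\nu_{j+1}^\star\le\nu_{j+1}^{\mathrm{c}}=V_{1,j}(\bar{x}_{j+1})\le\nu_j^\star-y_j^\top y_j$. This single chain delivers both recursive feasibility (the shifted point is feasible for every admissible $w_j$, so $\nu_{j+1}^\star$ is well defined) and the descent $\nu_{j+1}^\star\le\nu_j^\star-y_j^\top y_j$.

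Convergence then follows immediately: the assumption $P_{k,j}\succ0$ forces $\nu_j^\star\ge V_{0,j}(\bar{x}_j)>0$, so $(\nu_j^\star)_j$ is nonincreasing and bounded below, whence $\sum_{j\ge0}y_j^\top y_j\le\nu_0^\star<\infty$ by telescoping and thus $y_j\to0$. Finally, robust constraint satisfaction is inherited step by step: at each $j$ the initial condition \eqref{eq:initialInequality1} places $\bar{x}_j$ in the $\nu_j^\star$-sublevel set of $V_{0,j}$, and \eqref{eq:constraintInequality1} at $k=0$ implies \eqref{eq:feasibility}, so the applied input $\pi_j^{\mathrm{MPC}}(\bar{x}_j)$ meets $v_{0i}^\top v_{0i}\le1$; as feasibility holds at every $\bar{x}_j$ for every admissible disturbance sequence, the constraints \eqref{eq:controlConstraint} are satisfied robustly along \eqref{eq:recedingHorizonDynamics}.
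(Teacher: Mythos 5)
Your proposal is correct and follows essentially the same argument as the paper's proof: shift the step-$j$ solution (including the stationary tail and multipliers), use the robust increment inequality \eqref{eq:relaxedRobustCostBound} along the realized closed loop to re-establish the initial-state condition with a decreased level satisfying $\nu_{j+1}\le\nu_j-y_j^\top y_j$, invoke stationarity of $G_k$ for $k\ge N$ for the terminal inequality and monotonicity in $\nu$ for the constraint inequalities, and telescope to obtain $\sum_j y_j^\top y_j\le\nu_0$ and hence $y_j\to0$. The only cosmetic difference is your choice $\nu_{j+1}^{\mathrm{c}}=V_{1,j}(\bar{x}_{j+1})$ where the paper uses $\nu_j-y_j^\top y_j$; both yield the same descent and feasibility conclusions.
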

\begin{proof}
	See Appendix \ref{app:3}.
\end{proof}

In Theorem \ref{thm:MPC}, we interpret the convergence of $(y_j)_{j=0}^\infty$ to zero as stability result given the fact that an appropriate choice of $(g_k^1, C_k^1, D_k^{11}, D_k^{12})$ yields $y_j^\top = \begin{pmatrix}
	\bar{x}_j^\top & u_j^\top
\end{pmatrix}$ and implies $\bar{x}_j \to 0$ and $u_j \to 0$ for $j \to \infty$.

\section{Numerical example}
\label{sec:6}

We adopt an example from \cite{chen2022robust}, where the LTI system
\begin{align*}
	x_{k+1} = \begin{pmatrix}
		1+\epsilon_1 & 0.15\\
		0.1 & 1
	\end{pmatrix}
	x_k
	+
	\begin{pmatrix}
		0.1\\
		1.1 + \epsilon_2
	\end{pmatrix} u_k
\end{align*}
with $x_k \in \bbR^2$, $u_k \in \bbR$ and time-varying uncertain parameters $\epsilon_1$ and $\epsilon_2$ is considered. The states and inputs are constrained by
\begin{align*}
	x_k^\top \begin{pmatrix}
		1 & 0\\
		0 & 0
	\end{pmatrix} x_k &\leq 8^2, & x_k^\top \begin{pmatrix}
	0 & 0\\
	0 & 1
\end{pmatrix} x_k &\leq 8^2, & u_k^2 &\leq 4^2.
\end{align*}
In our git repository \url{https://github.com/SphinxDG/ConstrainedRobustControl}, we derive $G_k$ and a family of multipliers $\calM$ that describe this system, its constraints, and the uncertainties in the parameters $\epsilon_1$ and $\epsilon_2$. As a result, we can test the feasibility of the infinite horizon robust optimal control problem \eqref{eq:robustinfiniteDP} at some initial condition $\bar{x}$ by attempting to solve the relaxation \eqref{eq:relaxedRobustInfiniteDP}. We can further determine the feasibility of \eqref{eq:robustinfiniteDP} at $\bar{x}$ exactly using a dynamic programming method described in \cite{grieder2003robust}.
Thus, to evaluate our relaxation \eqref{eq:relaxedRobustInfiniteDP}, we sample initial conditions $\bar{x}$ from the equidistant $10 \times 10$ grid $\calX_0$ at $[-7.9,7. 9]^2$ and compute the fraction of initial conditions certified as feasible by \eqref{eq:relaxedRobustInfiniteDP} over the truly feasible initial conditions determined by the dynamic programming method of \cite{grieder2003robust}. To solve \eqref{eq:relaxedRobustInfiniteDP} we use the solver Mosek \cite{mosek} and the parser CVXPY \cite{agrawal2018rewriting}. The fraction of initial conditions correctly classified as feasible is plotted in Figure \ref{fig:coverageComparison} for different prediction horizons $N$ over an uncertainty magnitude $\gamma \in [0.05,0.45]$ with $\epsilon_1 \in [-\gamma,\gamma]$ and $\epsilon_2 \in [-0.1,0.1]$. As expected, a longer prediction horizon monotonically increases the fraction of feasible initial conditions and provides an improvement over Kothares' method \cite{kothare1996robust}, which is obtained for $N = 0$.

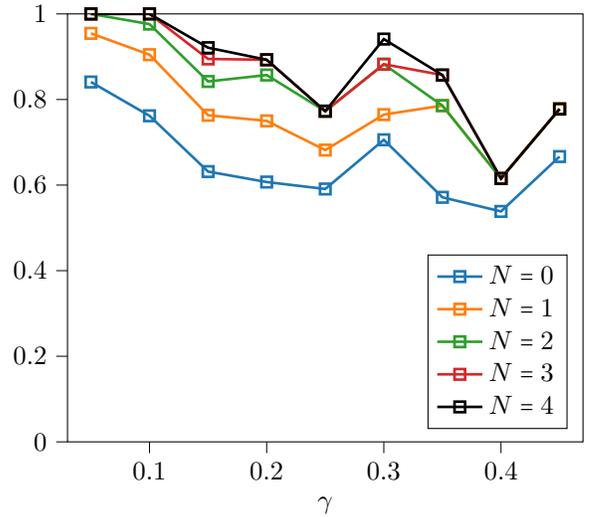
\begin{figure}
	\centering
\begin{tikzpicture}

\definecolor{crimson2143940}{RGB}{214,39,40}
\definecolor{darkgray176}{RGB}{176,176,176}
\definecolor{darkorange25512714}{RGB}{255,127,14}
\definecolor{forestgreen4416044}{RGB}{44,160,44}
\definecolor{mediumpurple148103189}{RGB}{148,103,189}
\definecolor{steelblue31119180}{RGB}{31,119,180}

\begin{axis}[
tick align=outside,
tick pos=left,
x grid style={darkgray176},
xlabel={$\gamma$},
xmin=0.03, xmax=0.47,
xtick style={color=black},
y grid style={darkgray176},
ymin=0, ymax=1,
ytick style={color=black},
legend pos={south east}
]
\addplot [line width=1.0pt, steelblue31119180, mark=square, mark options={solid, steelblue31119180}]
table {%
0.05 0.840909090909091
0.1 0.761904761904762
0.15 0.631578947368421
0.2 0.607142857142857
0.25 0.590909090909091
0.3 0.705882352941177
0.35 0.571428571428571
0.4 0.538461538461538
0.45 0.666666666666667
};
\addlegendentry{$N = 0$}
\addplot [line width=1.0pt, darkorange25512714, mark=square, mark options={solid, darkorange25512714}]
table {%
0.05 0.954545454545455
0.1 0.904761904761905
0.15 0.763157894736842
0.2 0.75
0.25 0.681818181818182
0.3 0.764705882352941
0.35 0.785714285714286
0.4 0.615384615384615
0.45 0.777777777777778
};
\addlegendentry{$N = 1$}
\addplot [line width=1.0pt, forestgreen4416044, mark=square, mark options={solid, forestgreen4416044}]
table {%
0.05 1
0.1 0.976190476190476
0.15 0.842105263157895
0.2 0.857142857142857
0.25 0.772727272727273
0.3 0.882352941176471
0.35 0.785714285714286
0.4 0.615384615384615
0.45 0.777777777777778
};
\addlegendentry{$N = 2$}
\addplot [line width=1.0pt, crimson2143940, mark=square, mark options={solid, crimson2143940}]
table {%
0.05 1
0.1 1
0.15 0.894736842105263
0.2 0.892857142857143
0.25 0.772727272727273
0.3 0.882352941176471
0.35 0.857142857142857
0.4 0.615384615384615
0.45 0.777777777777778
};
\addlegendentry{$N = 3$}
\addplot [line width=1.0pt, black, mark=square, mark options={solid, black}]
table {%
0.05 1
0.1 1
0.15 0.921052631578947
0.2 0.892857142857143
0.25 0.772727272727273
0.3 0.941176470588235
0.35 0.857142857142857
0.4 0.615384615384615
0.45 0.777777777777778
};
\addlegendentry{$N = 4$}
\end{axis}

\end{tikzpicture}
	\caption{This figure shows the fraction of initial conditions from $\calX_0 \cap \calF$, for which we found a solution $\tilde{\nu} > 0$ of \eqref{eq:relaxedRobustInfiniteDP}. Here, $\calF$ is the set of feasible initial conditions for the original problem \eqref{eq:robustinfiniteDP}. The parameter $\epsilon_1$ is assumed in $[-\gamma,\gamma]$ with $\gamma$ sampled from $\{0.05p \mid p = 1,\ldots,9\}$ and $\epsilon_2$ is assumed to be in $[-0.1,0.1]$. The plots show the fraction of feasible initial conditions for different prediction horizons $N$.}
	\label{fig:coverageComparison}
\end{figure}

As we mentioned, a similar experiment has been carried out in \cite{chen2022robust}, where an additional norm bounded disturbance has been considered. If we test the methods benchmarked in \cite{chen2022robust} on our modified setup and compare them to our solution, we obtain the results depicted in Figure \ref{fig:coverageComparison2}. We observe that SLSMPC achieves higher feasibility ratios, while our method performs at least as good as the best competitors  considered in \cite{chen2022robust}. Recall, however, that the method from \cite{chen2022robust} cannot be applied to systems in LFT-form directly and it is not shown to be recursively feasible in \cite{chen2022robust}. Furthermore, all methods considered in \cite{chen2022robust} were supplied with the exact feasible set of \eqref{eq:robustinfiniteDP} as terminal region, which can be expensive to compute in higher dimensions, whereas we did not make use of this set in \eqref{eq:relaxedRobustInfiniteDP}.

\begin{figure}
	\centering
\begin{tikzpicture}
	
	\definecolor{crimson2143940}{RGB}{214,39,40}
	\definecolor{darkgray176}{RGB}{176,176,176}
	\definecolor{darkorange25512714}{RGB}{255,127,14}
	\definecolor{forestgreen4416044}{RGB}{44,160,44}
	\definecolor{mediumpurple148103189}{RGB}{148,103,189}
	\definecolor{steelblue31119180}{RGB}{31,119,180}

	\definecolor{mycolor1}{rgb}{0.00000,0.44700,0.74100}%
	\definecolor{mycolor2}{rgb}{0.85000,0.32500,0.09800}%
	\definecolor{mycolor3}{rgb}{0.92900,0.69400,0.12500}%
	\definecolor{mycolor4}{rgb}{0.49400,0.18400,0.55600}%
	\definecolor{mycolor5}{rgb}{0.46600,0.67400,0.18800}%
	\definecolor{mycolor6}{rgb}{0.30100,0.74500,0.93300}%
	\definecolor{mycolor7}{rgb}{0.63500,0.07800,0.18400}%
	
	\begin{axis}[
		tick align=outside,
		tick pos=left,
		x grid style={darkgray176},
		xlabel={$\epsilon_1$},
		xmin=0.03, xmax=0.47,
		xtick style={color=black},
		y grid style={darkgray176},
		ymin=0, ymax=1,
		ytick style={color=black},
		]
		\addplot [line width=1.0pt, black, mark=square, mark options={solid, black}]
		table {%
			0.05 1
			0.1 1
			0.15 0.921052631578947
			0.2 0.892857142857143
			0.25 0.772727272727273
			0.3 0.941176470588235
			0.35 0.857142857142857
			0.4 0.615384615384615
			0.45 0.777777777777778
		}; \label{plt:thisPaper}
		\addplot [color=mycolor1, dashdotted, line width=1.0pt, mark=square, mark options={solid, mycolor1},opacity=0.5]
		table[row sep=crcr]{%
			0.05	1\\
			0.1	1\\
			0.15	1\\
			0.2	0.892857142857143\\
			0.25	0.818181818181818\\
			0.3	0.823529411764706\\
			0.35	0.571428571428571\\
			0.4	0.538461538461538\\
			0.45	0.666666666666667\\
		}; \label{plt:tubeA}
		
		\addplot [color=mycolor2, dashdotted, line width=1.0pt, mark=square, mark options={solid, mycolor2},opacity=0.5]
		table[row sep=crcr]{%
			0.05	1\\
			0.1	1\\
			0.15	1\\
			0.2	0.892857142857143\\
			0.25	0.636363636363636\\
			0.3	0.470588235294118\\
			0.35	0.5\\
			0.4	0.461538461538462\\
			0.45	0.333333333333333\\
		}; \label{plt:tubeB}
		
		\addplot [color=mycolor3, dashdotted, line width=1.0pt, mark=square, mark options={solid, mycolor3},opacity=0.5]
		table[row sep=crcr]{%
			0.05	1\\
			0.1	0.904761904761905\\
			0.15	0.815789473684211\\
			0.2	0.428571428571429\\
			0.25	0.272727272727273\\
			0.3	0\\
			0.35	0\\
			0.4	0\\
			0.45	0\\
		}; \label{plt:tubeC}
		
		\addplot [color=mycolor4, dashdotted, line width=1.0pt, mark=square, mark options={solid, mycolor4},opacity=0.5]
		table[row sep=crcr]{%
			0.05	1\\
			0.1	1\\
			0.15	1\\
			0.2	0.892857142857143\\
			0.25	0.727272727272727\\
			0.3	0.470588235294118\\
			0.35	0.5\\
			0.4	0.461538461538462\\
			0.45	0.444444444444444\\
		}; \label{plt:tubeD}
		
		\addplot [color=mycolor5, dotted, line width=1.0pt, mark=square, mark options={solid, mycolor5},opacity=0.5]
		table[row sep=crcr]{%
			0.05	1\\
			0.1	1\\
			0.15	1\\
			0.2	1\\
			0.25	1\\
			0.3	1\\
			0.35	1\\
			0.4	1\\
			0.45	1\\
		}; \label{plt:SLSMPC}
		
		\addplot [color=mycolor6, dotted, line width=1.0pt, mark=square, mark options={solid, mycolor6},opacity=0.5]
		table[row sep=crcr]{%
			0.05	0.954545454545455\\
			0.1	0.619047619047619\\
			0.15	0\\
			0.2	0\\
			0.25	0\\
			0.3	0\\
			0.35	0\\
			0.4	0\\
			0.45	0\\
		}; \label{plt:lumpedDisturbance}
		
		\addplot [color=mycolor7, dotted, line width=1.0pt, mark=square, mark options={solid, mycolor7},opacity=0.5]
		table[row sep=crcr]{%
			0.05	0.681818181818182\\
			0.1	0.0952380952380952\\
			0.15	0.0263157894736842\\
			0.2	0\\
			0.25	0\\
			0.3	0\\
			0.35	0\\
			0.4	0\\
			0.45	0\\
		}; \label{plt:OfflineTightening}
	\end{axis}
	
\end{tikzpicture}
	\caption{This figure compares the feasibility of \eqref{eq:relaxedRobustInfiniteDP} to other methods for robust MPC. The $y$-axis is the fraction of initial conditions from $\calX_0 \cap \calF$, for which feasibility of \eqref{eq:robustinfiniteDP} is certified. Here, $\calF$ is the set of feasible initial conditions for the original problem \eqref{eq:robustinfiniteDP}. The parameter $\epsilon_1$ is assumed in $[-\gamma,\gamma]$ with $\gamma$ sampled from $\{0.05p \mid p = 1,\ldots,9\}$ and $\epsilon_2$ is assumed to be in $[-0.1,0.1]$. The methods compared are our method (\ref{plt:thisPaper}), the tube based MPC methods \cite{langson2004robust} (\ref{plt:tubeA}), \cite{lorenzen2019robust} (\ref{plt:tubeB}), \cite{kohler2019linear} (\ref{plt:tubeC}), \cite{lu2019robust} (\ref{plt:tubeD}), the disturbance feedback methods \cite{bujarbaruah2022robust} (\ref{plt:lumpedDisturbance}), \cite{bujarbaruah2020robust} (\ref{plt:OfflineTightening}), and SLSMPC \cite{chen2022robust} (\ref{plt:SLSMPC}).}
	\label{fig:coverageComparison2}
\end{figure}
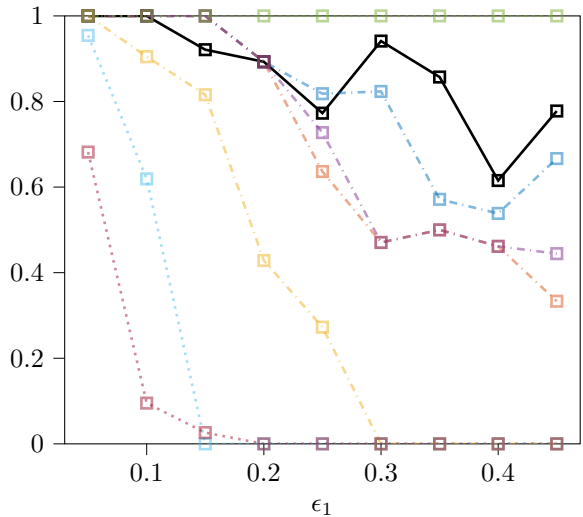

\section{Conclusion}
\label{sec:7}

In this article, we demonstrate how classical techniques from robust control theory can be exploited to solve dynamic programs with uncertainties and inequality constraints for time-varying linear systems. Since we invoke the standard relaxations from robust control for convexification, the solutions resulting from our method are inexact. We apply these solutions for MPC and highlight advantages over, e.g., tube-based MPC, which stems from the ability of our method to optimize feedback policies online and to incorporate flexible uncertainty descriptions in the form of LFT models. In future work, we aim to extend the method presented to nonlinear systems incorporating ideas from \cite{gramlich2022robust} and to reduce the computational burden due to solving LMIs making use of structure exploiting SDP solvers as in \cite{gramlich2023structure}.

\bibliographystyle{abbrv}
\bibliography{sources.bib}

\appendix

\subsection{Proof of Theorem \ref{thm:losslessCase}}
\label{app:1}
Let $(u_k)_{k=0}^{N-1}$ be a feasible input sequence for \eqref{eq:DP1}. Set the variables $(\widetilde{P}_k)_{k=0}^{N}$ and $(\widetilde{K}_k)_{k=0}^{N-1}$ equal to
\begin{align*}
	\widetilde{P}_k &= \frac{1}{c_k} \begin{pmatrix}
		1\\
		x_k
	\end{pmatrix} \begin{pmatrix}
	1\\ x_k
\end{pmatrix}^\top, & \widetilde{K}_k &= \frac{1}{c_k} u_k
	\begin{pmatrix}
		1\\ x_k
	\end{pmatrix}^\top
\end{align*}
with free parameters $(c_k)_{k=0}^N$, $c_k > 0$ for $k = 0,\ldots,N$ and $\tilde{\nu}$ and $Z$ to be determined. In this case, we obtain
\begin{align*}
	\begin{pmatrix}
		\tilde{f}_k^K & \widetilde{A}_k^K\\
		\tilde{g}_k^{1K} & \widetilde{C}_k^{1K}
	\end{pmatrix}
	&=
	\begin{pmatrix}
		f_k & A_k & B_k^1\\
		g_k^1 & C_k^1 & D_k^{11}\\
		g_{ki}^2 & C_{ki}^2 & D_{ki}^2
	\end{pmatrix}
	\begin{pmatrix}
		\tilde{p}_k^{11} & \tilde{p}_k^{12}\\
		\tilde{p}_k^{21} & \widetilde{P}_k^{22}\\
		\tilde{k}_k^1 & \widetilde{K}_k^2
	\end{pmatrix}\\
	&=
	\frac{1}{c_k}
	\begin{pmatrix}
		x_{k+1}\\
		y_k\\
		v_{ki}
	\end{pmatrix}
	\begin{pmatrix}
		1\\
		x_k
	\end{pmatrix}^\top .
\end{align*}
For these values, the matrix in the inequality \eqref{eq:linearCostInequality} reads
\begin{align*}
	\begin{pmatrix}
		\frac{1}{c_{k+1}} \begin{pmatrix}
			1\\
			x_{k+1}
		\end{pmatrix}
		\begin{pmatrix}
			1\\
			x_{k+1}
		\end{pmatrix}^\top & \begin{pmatrix}
			0\\
			0
		\end{pmatrix}
		&
		\frac{1}{c_k} \begin{pmatrix}
			1\\
			x_{k+1}
		\end{pmatrix}
		\begin{pmatrix}
			1\\
			x_{k}
		\end{pmatrix}^\top\\
		\begin{pmatrix}
			0 & 0
		\end{pmatrix} & I & \frac{1}{c_k} y_k \begin{pmatrix}
			1 & x_k^\top
		\end{pmatrix}\\
		\frac{1}{c_k} \begin{pmatrix}
			1\\
			x_{k}
		\end{pmatrix}
		\begin{pmatrix}
			1\\
			x_{k+1}
		\end{pmatrix}^\top &
		\frac{1}{c_k} \begin{pmatrix}
			1\\
			x_{k}
		\end{pmatrix} y_k^\top &
		\frac{1}{c_k} \begin{pmatrix}
			1\\ x_k
		\end{pmatrix} \begin{pmatrix}
		1\\ x_k
	\end{pmatrix}^\top
	\end{pmatrix}
\end{align*}
and must be positive semi-definite. Left- and right-multiplying this matrix with an arbitrary vector $\begin{pmatrix}
	r_1^\top & r_2^\top & r_3^\top
\end{pmatrix}$ and its transpose, yields the condition
\begin{align*}
	\begin{pmatrix}
		h_1\\ 1\\ h_2
	\end{pmatrix}^\top
	\begin{pmatrix}
		\frac{1}{c_{k+1}} & 0
		&
		\frac{1}{c_k} \\
		0 & r_2^\top r_2 & \frac{1}{c_k} r_2^\top y_k\\
		\frac{1}{c_k} &
		\frac{1}{c_k} y_k^\top r_2 &
		\frac{1}{c_k}
	\end{pmatrix}
	\begin{pmatrix}
		h_1\\ 1\\ h_2
	\end{pmatrix} \geq 0,
\end{align*}
where $h_1 = \begin{pmatrix}
	1 & x_{k+1}^\top
\end{pmatrix}r_1$ and $h_2 = \begin{pmatrix}
1 & x_{k}^\top
\end{pmatrix}r_3$. This constraint holds for all $h_1, h_2 \in \bbR$ if and only if the matrix in the center is positive semi-definite. By the Schur complement, this is the case if and only if
\begin{align}
	\frac{1}{c_k} - \frac{c_{k+1}}{c_k^2} - \frac{1}{c_k^2} \frac{r_2^\top y_k y_k^\top r_2}{r_2^\top r_2} &\geq 0 &\text{if } r_k \neq 0, \label{eq:rNonZero}\\
	\frac{1}{c_k} - \frac{c_{k+1}}{c_k^2}  &\geq 0 &\text{if } r_k = 0. \label{eq:rZero}
\end{align}
Here, $\frac{r_2^\top y_k y_k^\top r_2}{r_2^\top r_2}$ is bounded by the Rayleigh coefficient $y_k^\top y_k$. Hence, multiplying by $c_k^2$ yields the requirement
\begin{align*}
	c_k \geq c_{k+1} + y_k^\top y_k,
\end{align*}
which implies both \eqref{eq:rNonZero} and \eqref{eq:rZero}. Applying a similar left/right multiplication to \eqref{eq:constraintInequality2} results in the condition
\begin{align*}
	\frac{1}{c_k} v_{ki}^\top v_{ki} \leq \tilde{\nu}.
\end{align*}
Next, we consider the constraint \eqref{eq:linearInitialInequality}. The term $\sqrt{\Sigma_0}$ equals $\frac{1}{\sqrt{1 + \bar{x}^\top \bar{x}}} \begin{pmatrix}
	1 & \bar{x}^\top
\end{pmatrix}^\top \begin{pmatrix}
1 & \bar{x}^\top
\end{pmatrix}$. Further, we choose $Z = \frac{c_0 \tilde{\nu}^{2}}{1 + \bar{x}^\top \bar{x}} \begin{pmatrix}
1 & \bar{x}^\top
\end{pmatrix}^\top \begin{pmatrix}
1 & \bar{x}^\top
\end{pmatrix}$. With these choices, \eqref{eq:linearInitialInequality} is composed of the constraints $\trace Z = c_0 \tilde{\nu}^2 \leq \tilde{\nu}$ and
\begin{align*}
	\begin{pmatrix}
		\frac{1}{c_0}
		\begin{pmatrix}
			1 \\ \bar{x}
		\end{pmatrix} \begin{pmatrix}
			1 \\ \bar{x}
		\end{pmatrix}^\top & \frac{\tilde{\nu}}{\sqrt{1 + \bar{x}^\top \bar{x}}} \begin{pmatrix}
			1 \\ \bar{x}
		\end{pmatrix} \begin{pmatrix}
		1 \\ \bar{x}
	\end{pmatrix}^\top\\
	\frac{\tilde{\nu}}{\sqrt{1 + \bar{x}^\top \bar{x}}} \begin{pmatrix}
		1 \\ \bar{x}
	\end{pmatrix} \begin{pmatrix}
		1 \\ \bar{x}
	\end{pmatrix}^\top & \frac{c_0 \tilde{\nu}^{2}}{1 + \bar{x}^\top \bar{x}} \begin{pmatrix}
	1 \\ \bar{x}
\end{pmatrix} \begin{pmatrix}
1 \\ \bar{x}
\end{pmatrix}^\top
	\end{pmatrix} \succeq 0.
\end{align*}
Of these requirements, the matrix inequality is always satisfied and the trace condition is equivalent to $c_0 \leq \frac{1}{\tilde{\nu}}$.

Finally, we consider \eqref{eq:linearTerminalInequality}. This constraint reads
\begin{align*}
	\frac{1}{c_{N}} \begin{pmatrix}
		1\\x_{N}
	\end{pmatrix}
	\begin{pmatrix}
		1\\
		x_{N}
	\end{pmatrix}^\top \preceq P_f^{-1} \Leftrightarrow c_{N} \geq \begin{pmatrix}
		1\\x_{N}
	\end{pmatrix}^\top P_f
	\begin{pmatrix}
	1\\
	x_{N}
	\end{pmatrix},
\end{align*}
where the equivalence is established by applying the Schur complement twice. We summarize the constraints on the sequence $(c_k)_{k=0}^N$ and the variable $\tilde{\nu}$:
\begin{subequations}
	\label{eq:c_conditions}
	\begin{align}
		c_0 & \leq \tilde{\nu}^{-1}, \label{eq:initial_c}\\
		c_k & \geq \tilde{\nu}^{-1} v_{ki}^\top v_{ki}, & k=0,\ldots, N,~ i = 1,\ldots,s, \label{eq:constraint_c}\\
		c_k & \geq y_k^\top y_k + c_{k+1}, & k = 0,\ldots,N, \label{eq:dissipativity_c}\\
		c_{N} & \geq \begin{pmatrix}
			1\\x_{N}
		\end{pmatrix}^\top P_f
		\begin{pmatrix}
			1\\
			x_{N}
		\end{pmatrix}. \label{eq:terminal_c}
	\end{align}
\end{subequations}
By choosing $\varepsilon = 1- \max_{k,i} v_{ki}^\top v_{ki} > 0$ and for $j = 0,\ldots,N$
\begin{align*}
c_j &= \sum_{j = j}^{N-1} y_k^\top y_k + 2\max \left\{ \varepsilon^{-1}\sum_{k=0}^{N-1} y_k^\top y_k , \begin{pmatrix}
	1\\x_{N}
\end{pmatrix}^\top P_f
\begin{pmatrix}
	1\\
	x_{N}
\end{pmatrix} \right\},\\
\tilde{\nu}^{-1} &= (1-\varepsilon)^{-1} c_N,
\end{align*}
\eqref{eq:initial_c}-\eqref{eq:terminal_c} are satisfied, with the trace condition in \eqref{eq:initial_c} strictly satisfied.
Hence, we see that a strictly feasible candidate solution to \eqref{eq:DP1} enables the construction of a non-strictly feasible candidate solution to \eqref{eq:relaxedDP1}.

Next, we construct a suitable perturbation of $(\widetilde{P}_k)_{k=0}^{N}$, $(\widetilde{K}_k)_{k=0}^{N-1}$, $Z$ to strictly satisfy the constraints of \eqref{eq:relaxedDP1}.

To this end, consider strict versions of the matrix inequalities \eqref{eq:bellmanInequality1} and \eqref{eq:terminalInequality1}, which can be denoted
\begin{align*}
	\begin{pmatrix}
		p_k^{11} & p_k^{12}\\
		p_k^{21} & P_k^{22}
	\end{pmatrix}
	&\succ 
	(\bullet)^\top
	\begin{pmatrix}
		p_{k+1}^{11} & p_{k+1}^{12} & 0\\
		p_{k+1}^{21} & P_{k+1}^{22} & 0\\
		0 & 0 & I
	\end{pmatrix}
	\begin{pmatrix}
		1 & 0\\
		f_k^K & A_k^K\\
		g_k^{1K} & C_k^{1K}
	\end{pmatrix},\\
	P_N &\succ P_f.
\end{align*}
Determining a solution $(\hat{P}_k)_{k=0}^N$ to these inequalities for the controller $\hat{K}_k := 0, k = 0,\ldots ,N-1$ is always possible in a Riccati-like backward sweep. Since this solution consists of positive definite matrices $\hat{P}_k$, it can be multiplied by a sufficiently large scalar, such that \eqref{eq:constraintInequality1} holds strictly for $k = 0,\ldots, N-1$ and $i = 1,\ldots,s$ for $\nu = \tilde{\nu}^{-1}$ and $\tilde{\nu}$ chosen as in the privious step. The constraint \eqref{eq:initialInequality1}, however, will not hold for this solution $(P_k)_{k=0}^N = (\hat{P}_k)_{k=0}^N$, $(K_k)_{k=0}^{N-1} = (\hat{K}_k)_{k=0}^{N-1}$. Now, following the lines of the proof of Theorem \ref{thm:PeakGainControllerSynthesis} reveals that the variables
\begin{align*}
	\widehat{\widetilde{P}}_k := \widehat{P}_k^{-1},~~ \widehat{\widetilde{K}}_k := \widehat{K}_k\widehat{P}_k^{-1},~~ \hat{\tilde{\nu}} := \tilde{\nu},~~ \widehat{Z} := 2\nu^{-2} \sqrt{\Sigma_0} \widehat{P}_0 \sqrt{\Sigma_0}
\end{align*}
satisfy all conditions in \eqref{eq:relaxedDP1} strictly except for $\trace Z \leq \tilde{\nu}$. Now, consider the convex combinations
\begin{align*}
	\widetilde{P}_k(\alpha) &= \alpha \widehat{\widetilde{P}}_k + (1-\alpha) \widetilde{P}_k, & \widetilde{K}_k(\alpha) \alpha &= \widehat{\widetilde{K}}_k + (1-\alpha) \widetilde{K}_k,\\
	Z(\alpha) &= \alpha \widehat{Z} + (1-\alpha) Z, & \tilde{\nu}(\alpha) &= \alpha \hat{\tilde{\nu}} + (1-\alpha) \tilde{\nu}.
\end{align*}
Since the constraints in \eqref{eq:relaxedDP1} are LMI constraints, these convex combination satisfy all constraints strictly  for any $\alpha \in ]0,1[$ except for $\trace Z(\alpha) \leq \tilde{\nu}(\alpha)$. However, since $Z = Z(0)$ and $\tilde{\nu} = \tilde{\nu}(0)$ satisfy this constraint strictly, there exists a sufficently small $\alpha > 0$, such that also this constraint is strictly satisfied by the convex combination.

These arguments show how a strictly feasible candidate solution to \eqref{eq:DP1} enables the construction of a strictly feasible candidate solution to \eqref{eq:relaxedDP1}. For the other direction, we simply construct the controller $\pi_k(x_k) := \widetilde{K}_k \widetilde{P}_k^{-1} \begin{pmatrix}
	1 & x_k^\top
\end{pmatrix}^\top$ and perform a forward simulation. {\hfill$\blacksquare$}

\subsection{On the optimal value of \eqref{eq:DP1} and \eqref{eq:relaxedDP1}}
\label{app:2}
In this section, we show that, if \eqref{eq:DP1} is strictly feasible, then the optimal cost of \eqref{eq:DP1} can be approximated arbitrarily closely by replacing $P_f$ with $P_f + t e_1 e_1^\top$ and considering $\tilde{\nu}^{-1} - t$ as the new optimal value of \eqref{eq:relaxedDP1} for a sufficiently large value of $t$. Notice that this leaves the optimal control problem unchanged, since adding $t e_1 e_1^\top$ to $P_f$ corresponds to adding the constant $t$ to the terminal cost and substracting $t$ from $\tilde{\nu}$ removes this constant offset from the final value of the cost.

To understand why this works, notice that a solution to \eqref{eq:c_conditions} is given by the choice
\begin{align*}
	c_j &= \sum_{j = j}^{N-1} y_k^\top y_k + \max \left\{ \varepsilon^{-1}\sum_{k=0}^{N-1} y_k^\top y_k , \begin{pmatrix}
		1\\x_{N}
	\end{pmatrix}^\top P_f
	\begin{pmatrix}
		1\\
		x_{N}
	\end{pmatrix} \right\},\\
	\tilde{\nu}^{-1} &=  \sum_{j = 0}^{N-1} y_k^\top y_k + \max \left\{ \varepsilon^{-1}\sum_{k=0}^{N-1} y_k^\top y_k , \begin{pmatrix}
		1\\x_{N}
	\end{pmatrix}^\top P_f
	\begin{pmatrix}
		1\\
		x_{N}
	\end{pmatrix} \right\},
\end{align*}
This choice for $(c_k)_{k=0}^{N}$ and $\tilde{\nu}$ is constructed from any solution of \eqref{eq:DP1} with $\varepsilon = 1- \max_{k,i} v_{ki}^\top v_{ki} > 0$ for all $k = 0,\ldots,N-1$, $i = 1,\ldots,s$ and enables the construction of a solution to problem \eqref{eq:relaxedDP1} along the lines of Appendix \ref{app:1}. Recall that the cost of \eqref{eq:relaxedDP1} is given by $\tilde{\nu}^{-1}$. When replacing $P_f$ by $P_f + t e_1 e_1^\top$, this yields
\begin{align*}
	\tilde{\nu}^{-1} &=  \sum_{j = 0}^{N-1} y_k^\top y_k + \max \left\{ \varepsilon^{-1}\sum_{k=0}^{N-1} y_k^\top y_k , t+\begin{pmatrix}
		1\\x_{N}
	\end{pmatrix}^\top P_f
	\begin{pmatrix}
		1\\
		x_{N}
	\end{pmatrix} \right\}
\end{align*}
and, consequently, for a sufficiently large value of $t$, $\tilde{\nu}^{-1} - t$ equals the optimal cost of the considered trajectory.

If \eqref{eq:DP1} is strictly feasible, then any optimal trajectory of \eqref{eq:DP1} can be approximated arbitrarily closely by some strictly feasible trajectory to which the above procedure applies. Consequently, for $t \to \infty$, $\tilde{\nu}^{-1} - t$ approaches the optimal cost of \eqref{eq:DP1}.

\subsection{Proof of Theorem \ref{thm:MPC}}
\label{app:3}

Denote \eqref{eq:robustCostInequality} by $F(G_k,P_k,P_{k+1},K_k,M_k) \preceq 0$ and
\eqref{eq:constraintInequality1} by $H(G_k,P_k,K_k,\nu) \preceq 0$. Then determining the input $u_j = \pi_j^{\text{MPC}}(\bar{x}_j)$ consists of finding a minimal $\nu_j$ as well as sequences $(P_{k,j})_{k=0}^{N}$, $(K_{k,j})_{k=0}^{N}$ satisfying
\begin{subequations}
\begin{align}
	0 & \succeq F(G_{k,j},P_{k,j},P_{k+1,j},K_{k,j},M_{k,j}) \quad k = 0,\ldots,N-1 \label{eq:recedingCostIneq}\\
	0 &\succeq H(G_{k,j},P_{k,j},K_{k,j},\nu_j) \qquad k = 0,\ldots,N \label{eq:recedingConstrIneq}\\
	0 & \geq \begin{pmatrix}
		1 & \bar{x}_j^\top
	\end{pmatrix} P_{0,j} \begin{pmatrix}
		1 & \bar{x}_j^\top
	\end{pmatrix}^\top - \nu_j \label{eq:recedingInitialIneq}\\
	0 & \succeq F(G_{N,j},P_{N,j},P_{N,j},K_{N,j},M_{N,j}) \label{eq:recedingTerminalIneq}
\end{align}
\end{subequations}
using the convexification \eqref{eq:relaxedRobustInfiniteDP} and setting $u_j = K_{0,j} \begin{pmatrix}
	1 & \bar{x}_j^\top
\end{pmatrix}^\top$. We show now that if this problem is feasible at time $j \in \bbN$ and $\bar{x}_{j+1}$ is determined through the system dynamics \eqref{eq:recedingHorizonDynamics}, where $w_j$ is some disturbance satisfying \eqref{eq:disturbanceConstraint}, then this problem is feasible at time $j+1$.

To this end, we define the candidate solution 
\begin{align*}
	P_{k,j+1}' &= P_{k+1,j}, & K_{k,1}' &= K_{k+1,j}, & k = 0,\ldots, N-1\\
	P_{N,j+1}' &= P_{N,j}, & K_{N,j+1}' &= K_{N,j},
\end{align*}
$\nu_{j+1}' = \nu_j - y_j^\top y_j$ at time $j+1$. Note that the problem data $G_{k,j+1} = G_{k+1,j}$ is shifted in the same fashion. For this candidate solution, we have
\begin{align*}
	\nu_{j+1}' = \nu_j - y_j^\top y_j & \geq 
	\begin{pmatrix}
		1 & \bar{x}_j^\top
	\end{pmatrix} P_{0,j} \begin{pmatrix}
		1 & \bar{x}_j^\top
	\end{pmatrix}^\top - y_j^\top y_j\\
	& \overset{\eqref{eq:robustCostBound}}{\geq}
	\begin{pmatrix}
		1 & \bar{x}_{j+1}^\top
	\end{pmatrix} P_{1,j} \begin{pmatrix}
		1 & \bar{x}_{j+1}^\top
	\end{pmatrix}^\top\\
	&= \begin{pmatrix}
		1 & \bar{x}_{j+1}^\top
	\end{pmatrix} P_{0,j+1}' \begin{pmatrix}
		1 & \bar{x}_{j+1}^\top  \end{pmatrix}^\top
\end{align*}
where \eqref{eq:robustCostBound} holds due to \eqref{eq:recedingCostIneq} and Proposition \ref{prop:PreliminaryRobustInequaities}. This shows \eqref{eq:recedingInitialIneq} at time $j + 1$. Next, \eqref{eq:recedingCostIneq} holds trivially for $k = 0,\ldots, N -2$ at time $j+1$. For $k = N - 1$, on the other hand, $P_{k,j+1}' = P_{k+1,j+1}' = P_{N,j}$, such that \eqref{eq:recedingCostIneq} for $k = N-1$ at time $j+1$ is implied by \eqref{eq:recedingTerminalIneq} at time $j$.
The constraints \eqref{eq:recedingConstrIneq} for the candidate solution at time $j+1$ are the same as at time $j$ for the original solution except for the first constraint getting replaced by an additional copy of the last constraint and $\nu_{j+1}'$ replacing $\nu_j$. However, an inspection of \eqref{eq:constraintInequality1} reveals that replacing $\nu_j$ by $\nu_{j+1}' \leq \nu_j$ cannot cause constraint violation. Finally, \eqref{eq:recedingTerminalIneq} also remains feasible for the candidate solution due to the time invariance of all problem parameters for $k \geq N$. In conclusion, we constructed a candidate solution for the problem at time $j+1$, which is feasible with $\nu_{j+1}' = \nu_j - y_j^\top y_j$. Consequently, evaluation of $\pi_{j+1}^{\mathrm{MPC}}(\bar{x}_{j+1})$ might produce a different solution with potentially smaller $\nu_{j+1}$, but can never be infeasible. For this reason, $\nu_{j+1} \leq \nu_j - y_j^\top y_j$ must hold implying
\begin{align*}
	\nu_0 \geq \sum_{j=0}^\infty y_j^\top y_j.
\end{align*}
Consequently, $y_j$ must converge to zero under this controller. Finally, the constraint \eqref{eq:recedingConstrIneq} implies that constraints are always satisfied by this controller. {\hfill$\blacksquare$}

\end{document}